\documentclass[pdflatex,sn-mathphys-num]{sn-jnl}

\usepackage{graphicx} 

\usepackage{amsmath,amssymb,amsthm}
\usepackage[ruled,vlined]{algorithm2e}
\usepackage{algpseudocode}
\SetKwComment{Comment}{// }{}
\SetKwFunction{FMain}{MainFunction}
\SetKwProg{Fn}{Function}{:}{}
\usepackage{natbib}
\usepackage{tikz}
\usepackage{pgfplots}
\usepackage{booktabs}


\setlength{\tabcolsep}{4pt}

\usepackage[pdf]{graphviz}
\usepackage{multirow}
\usepackage{mathdots}
\usepackage{hyperref}
\usepackage[textsize=footnotesize]{todonotes}
\usetikzlibrary{shapes.geometric}
\usetikzlibrary{calc}
\usetikzlibrary{tikzmark,overlay-beamer-styles,babel}
\usetikzlibrary{fit, matrix}
\usetikzlibrary{arrows.meta}
\tikzset{>={Latex[length=3mm, width=2.5mm]}}
\pgfplotsset{compat=1.18}

\tikzset{auto shift/.style={auto=right,
		to path={ let \p1=(\tikztostart),\p2=(\tikztotarget),
			\n1={atan2(\y2-\y1,\x2-\x1)},\n2={\n1+180}
			in ($(\tikztostart.{\n1})!1mm!270:(\tikztotarget.{\n2})$) -- 
			($(\tikztotarget.{\n2})!1mm!90:(\tikztostart.{\n1})$) \tikztonodes}}}



\newtheorem{definition}{Definition}
\newtheorem{theorem}{Theorem}
\newtheorem{prop}[theorem]{Proposition}
\newtheorem{remark}{Remark}
\newtheorem{corollary}[theorem]{Corollary}
\newtheorem{lemma}[theorem]{Lemma}


\setlength{\marginparwidth}{2cm}

\newcommand{\R}{\mathbb{R}} 
\newcommand{\Z}{\mathbb{Z}} 

\DeclareMathOperator{\Gcal}{\mathcal{G}}
\DeclareMathOperator{\Ecal}{\mathcal{E}}

\DeclareMathOperator{\Vcal}{\mathcal{V}}

\DeclareMathOperator{\Gfrak}{\mathfrak{G}}

\DeclareMathOperator{\TV}{TV}



\title{Augmentation Algorithms for Integer Programs with Total Variation-like Regularization}
\date{\today}

\author*[1]{\fnm{Dominic} \sur{Yang}}\email{dominic.yang@anl.gov}

\author[1]{\fnm{Sven} \sur{Leyffer}}\email{leyffer@anl.gov}

\author[2]{\fnm{Miles} \sur{Bakenhus}}\email{mbakenhus@hawk.iit.edu}

\affil*[1]{\orgdiv{MCS Division}, \orgname{Argonne National Laboratory}, \country{USA}}

\affil[2]{\orgdiv{Department of Applied Mathematics}, \orgname{Illinois Institute of Technology}, \country{USA}}


\begin{document}

\abstract{We address a class of integer optimization programs with a total variation-like regularizer and convex, separable constraints on a graph. Our approach makes use of the Graver basis, an optimality certificate for integer programs, which we characterize as corresponding to the collection of induced connected subgraphs of our graph. We demonstrate how to use this basis to craft an exact global optimization algorithm for the unconstrained problem recovering a method first shown by Kolmogorov and Shioura in 2009. 
    We then address the problem with an additional budget constraint with a randomized heuristic algorithm that samples improving moves from the Graver basis in a randomized variant of the simplex algorithm. Through comprehensive experiments, we demonstrate that this randomized algorithm is competitive with and often outperforms state-of-the-art integer program solvers.}

\keywords{Randomized Algorithms, Graver Bases, Simplex Method, Combinatorial Optimization}


\pacs[MSC Classification]{90C27, 90C59, 90C10}
\maketitle



\section{Introduction}




We address combinatorial optimization problems of the following form:
\begin{subequations}
\begin{align}
    \min_{x\in \Z^n} \qquad& J(x) := \sum_{v \in \Vcal}F_v(x_v) + \sum_{uv \in \Ecal}G_{uv}(x_u -x_v) \\
    \text{s.t.} \qquad& \sum_{v \in \Vcal} H_v(x_v) \le \Delta, \label{eq:budget-constraint}\\ 
    &x \in \{0,\dots,Q\}^{\Vcal},
\end{align} \label{eq:General-Problem}%
\end{subequations}
where we have an associated directed graph $\Gcal = (\Vcal, \Ecal)$;  $F_v,$ $G_{uv},$ and $H_v$ are univariate functions that are assumed to be convex; and $Q \in \mathbb{Z}_{\ge 0}$ and $\Delta \ge 0$ are given parameters. We define
\begin{equation}
F(x) = \sum_{v\in \Vcal}F_v(x), \quad G(x) = \sum_{uv \in \Ecal}G_{uv}(x_u - x_v), \quad \text{and} \quad  H(x) = \sum_v H_v(x_v).
\label{eq:obj-graph-structure}
\end{equation}
We refer to the constraint $H$ as the budget constraint for \eqref{eq:General-Problem}.
Despite its simple form, 
this is a challenging combinatorial optimization problem with a number of important applications, including image denoising \cite{rudin1992nonlinear} and topology optimization \cite{manns2024discrete}.

In the absence of constraint \eqref{eq:budget-constraint}, this problem was studied in the late 1990s and 2000s extensively in the computer vision and pattern recognition community, and most algorithms addressing this problem use a minimum cut approach. 
In 1998 \cite{ishikawa1998segmentation} (and later refined in 2003 \cite{ishikawa2003exact}), Ishikawa produced one of the first exact algorithms for solving the unconstrained problem even when $F_v$ are general functions. 
We note in particular the method of Kolmogorov and Shioura in \cite{kolmogorov2009new} that exploits the fact that $J$ is an $L^{\natural}$-convex function for which there are known polynomial-time algorithms (see \cite{murota2004optimality,murota2004steepest} for definitions and algorithms). 
For this class of functions, it is known that if at a given point $x$, $J(x) \le J(x \pm \chi_X)$ for all choices of $X \subset \Vcal$ where $x \pm \chi_X$ is feasible, $x$ is globally optimal. 
This leads to a simple augmentation routine shown in Algorithm \ref{alg:NL-Augmentation} that repeatedly solves binary optimization problems to determine the subsets $X$ to increment or decrement on, each of which can be found by solving an associated minimum cut problem.

\begin{algorithm}
\caption{Augmentation Algorithm for Unconstrained Version of \eqref{eq:General-Problem} (from \cite{kolmogorov2009new})}\label{alg:NL-Augmentation}
\DontPrintSemicolon
\KwData{$x^{(0)} \in \{0,1,\ldots,Q\}^{\Vcal}$}
$k \gets 0$\;
\While{True}
{    
    $\Delta x^{\text{up}} \gets \text{argmin}_{\Delta x \in \{0,1\}^{\Vcal}, x^{(k)}+\Delta x \le Q} J(x^{(k)} + \Delta x)$\;
    $x^{(k+1)} \gets x^{(k)} + \Delta x^{\text{up}}$\;
    $\Delta x^{\text{down}} \gets \text{argmin}_{\Delta x \in \{0,1\}^{\Vcal}, x^{(k+1)}-\Delta x \ge 0} J(x^{(k+1)} - \Delta x)$\;
    $x^{(k+2)} \gets x^{(k+1)} - \Delta x^{\text{down}}$\;
    \If(\Comment*[f]{Neither Augmentation Changed $x$}){$x^{(k+2)} = x^{(k)}$}
    {
        \Return $x^{(k)}$ \Comment{Point is Optimal}
    }
    $k \gets k + 2$\;
}
\end{algorithm}

The methods we develop in this paper bear similarity to this approach, but we ground our reasoning in the context of Graver basis augmentation methods. 
The Graver basis, first introduced in \cite{graver1975foundations}, is a collection of vectors associated with a given integer matrix $A$ that most importantly serve as an optimality certificate for any integer program with a convex, separable objective that uses $A$ as its constraint matrix (see \cite{de2012algebraic} for a comprehensive introduction). In Theorem \ref{thm:graver-basis} we establish the structure of the Graver basis for matrix $A_{TV}$ associated with the constraints $\{x_u - x_v = a_{uv}\}_{uv \in \Ecal}$ establishing that $\{\pm \chi_S : S \subset \Vcal, S \text{ induces a connected subgraph}\}$ is an optimality certificate for \eqref{eq:General-Problem}.
Hence, instead of optimizing over all subsets of $\{0,1\}^{\Vcal}$, we can modify each of the subproblem solves in Algorithm \ref{alg:NL-Augmentation} to  consider only subsets that induce connected subgraphs. 

This alternate basis presents an opportunity for the development of randomized heuristics where we sample these connected subgraphs.
In this sense we are now modifying Algorithm \ref{alg:NL-Augmentation} to replace the exact subproblem solves with randomized approximate solves given by sampling the Graver basis.
Since $A_{TV}$ is well known to be totally unimodular, edges of any polytope where $A_{TV}$ is the constraint matrix are necessarily integer multiples of Graver basis elements \cite{thomas1995geometric}.
Therefore, algorithms that traverse the edges of the underlying polytope, in particular primal simplex methods, effectively perform Graver basis augmentation algorithms.
In our work we establish how to randomize the operation of a primal simplex method to sample improving Graver augmentations as well as adapt the method to ensure feasibility according to the constraint \eqref{eq:budget-constraint}. 

\paragraph{Related Work.}
Several other polynomial-time algorithms exist for handling the unconstrained version of \eqref{eq:General-Problem}. 
In \cite{ahuja2004cut} and \cite{hochbaum2001efficient}, the authors present  minimum cut algorithms for solving the problem.  Boykov et al. in \cite{boykov2002fast} and \cite{boykov2004experimental} address the more general situation where $G_{uv}$ is an arbitrary function of two variables and provide approximation guarantees for their algorithm. The authors in \cite{bioucas2007phase} address specifically the case without the first term $F$. 

The full problem with the budgetary constraint included has seen comparatively less study in the literature. It is known to contain NP-hard problems including finding a minimum $s-t$ cut of a directed graph with a bound on the number of vertices on one side (shown in \cite{armon2006multicriteria}). Recently, a few papers by Manns and Severitt have focused  on the case where $F$ is linear, $G$ is the total variation, and $H$ restricts the feasible space to be in a 1-norm ball about a given point. In \cite{severitt2023efficient}, they study a one-dimensional variant of the total variation, i.e., where the underlying graph in \eqref{eq:General-Problem} is a path graph and they give a polynomial algorithm for this setting. In \cite{manns2024discrete}, they study the two-dimensional version of the anisotropic total variation where the underlying graph is an $M\times N$ grid and give primal heuristics, branching rules and methods for producing cutting planes for this problem.

We are broadly interested in problems where $G_{uv}$ is the absolute value, in which case $G$ is understood to be the total variation (TV) of $x$ with respect to the underlying graph $\Gcal$ first introduced as a regularizer in \cite{rudin1992nonlinear}. In its original context as an image denoiser, it was notable  for its ability to smooth the solution while still preserving sharp curves in the image. A few papers \cite{zalesky2002network,chambolle2005total} also specifically address the unconstrained version of \eqref{eq:General-Problem} where $G$ is the anisotropic total variation of a graph. Recently, it has seen use in the context of topology optimization \cite{manns2024discrete,severitt2023efficient} in order to limit speckle patterns in produced solutions. We now review two applications of \eqref{eq:General-Problem} where the total variation is explicitly used as a regularizer which we will refer to later in the paper.

\paragraph{Total Variation Denoising.}

The classic setting for problems of the form \eqref{eq:General-Problem} is in the context of image denoising. In this setting we have an observed image $\hat{x} \in \mathbb{R}^{M\times N}$ that has been corrupted by  noise, and we want to produce an image that approximates the original image. To this end, we set up the following optimization problem:
\begin{subequations}
\begin{align}
    \min_{x} \qquad & \sum_{i=1}^{M}\sum_{j=1}^N (x_{ij} - \hat{x}_{ij})^2 + \alpha\left(\sum_{i=1}^{M}\sum_{j=1}^{N-1} |x_{i,j+1} - x_{ij}| + \sum_{i=1}^{M-1}\sum_{j=1}^N |x_{i+1,j} - x_{ij}|\right) \\
    \text{s.t.} \qquad & x_{ij} \in \{0,1,\ldots,Q\}^{M \times N}.
\end{align} \label{eq:image-reconstruction}%
\end{subequations}
The first term in the objective is a fidelity term and ensures that the produced solution remains close to the noisy image;  the latter two sums constitute the total variation. In this expression, $\alpha$ dictates the relative weighting of data fidelity and total variation. 

A simple extension is the addition of a bound over the sum of the $x$ variables:
\begin{equation}
    \sum_{i=1}^M\sum_{j=1}^N x_{ij} \le \Delta . \label{eq:image-budget-constraint}\\
\end{equation}
This constraint may seem artificial in the context of image denoising; but in topology optimization contexts where we are often interested in recreating the solutions in a physical domain, this constraint imposes a budgetary constraint on the amount of material allowed. For this reason we call \eqref{eq:image-reconstruction} with the additional constraint \eqref{eq:image-budget-constraint} the \textbf{image reconstruction problem}.

\paragraph{Trust-Region--Constrained Total Variation Problem.}
When $F$ is a general nonlinear function, we may produce approximate solutions to \eqref{eq:General-Problem} by linearizing $F$ about a point $\hat{x}$ as $F(x) \approx \sum_v (F(\hat{x}) + \partial F/\partial x_v(\hat{x})(x- \hat{x}))$.
These problems emerge as subproblems in a broader optimization scheme, where we take this linear model and pose the optimization problem over a restriction of the original feasible space to a smaller region about $\hat{x}$ where the approximation of $F$ is good. This smaller region,  called a trust region, is typically modeled as a ball in a given norm of radius $\Delta$ where $\Delta$ is chosen appropriately. We repeatedly solve this problem; and depending on the quality of solutions produced, we accept the solution and produce a new model about the next point or reject the solution and reduce $\Delta$ to give a better model. This leads to following subproblem where $c_v = \partial F/\partial x_v(\hat{x})$ and we choose the 1-norm:
\begin{subequations}
\begin{align}
    \min_{x} \qquad & \sum_{v\in \Vcal} c_vx_v + \alpha\sum_{uv \in \Ecal} |x_u -x_v|  \\
    \text{s.t.} \qquad &\|x-\hat{x}\|_1 \le \Delta \\
    &x \in \{0,1,\ldots,Q\}^{\Vcal}.
\end{align}\label{eq:trust-region-subproblem}%
\end{subequations}
We call this problem the \textbf{trust-region--constrained subproblem} and note that it often appears in the context of topology optimization where the total variation regularization promotes manufacturability constraints.
It arises as a subproblem in the sequential linearization approach to  solve integer optimal control problems regularized with a total variation penalty; see \cite{leyffer2022sequential}. 
In the case of a one-dimensional domain of the control function, it  can be shown that the iterates produced by a trust-region algorithm converge to points that satisfy certain first-order stationarity conditions for local optimality.

\paragraph{Terminology and Notation.}
We introduce some terminology and notation that will appear throughout the  paper. For every optimization problem we work with, we will always have an associated directed graph $\Gcal = (\Vcal, \Ecal)$, where $\Vcal$ is the set of vertices and $\Ecal \subset \Vcal \times \Vcal$ is the set of edges. We represent an edge that leaves vertex $u$ and arrives at vertex $v$ with the notation $uv$.  In certain cases (in particular for total variation problems), there may not be a natural orientation on the graph, and any choice of orientation will suffice.

A \textit{path} in $\Gcal$ is a sequence of vertices $v_1, \ldots, v_k$ such that $v_iv_{i+1} \in \Ecal$ for $i=1,\ldots,k-1$. A \textit{subgraph} of $\Gcal$ is a graph $\Gcal' = (\Vcal', \Ecal')$ where $\Vcal' \subset \Vcal$ and $\Ecal' \subset \Ecal \cap (\Vcal' \times \Vcal')$. A subgraph is \textit{strongly connected} if for any two vertices $u, v\in \Vcal'$ there is a path from $u$ to $v$ and from $v$ to $u$;  it is \textit{weakly connected} if, when ignoring directions, there is a path between any two vertices. Unless otherwise specified, when we refer to a graph as connected, we will mean it in the weak sense. Given a subset of vertices $S \subset V$, $S$ induces a subgraph $\Gcal' = (S, \Ecal')$, where $\Ecal'$ comprises all edges in $\Ecal$ that are between vertices in $S$, that is, $\Ecal' = (S \times S) \cap \Ecal$;  we call any such subgraph an \textit{induced subgraph}. We denote the set of outgoing edges from $S$ as $\delta^+(S) = \{uv \in \Ecal: u \in S\}$ and the set of incoming edges to $S$ as $\delta^-(S) = \{uv \in \Ecal: v \in S\}$. The union of both sets is denoted $\delta(S) = \delta^+(S) \cup \delta^-(S)$.
Given a vertex $u\in \Vcal$, we let $e_u$ denote a vector of length $|\Vcal|$ that is 1 at the index corresponding to $u$ and 0 elsewhere. Similarly, given an edge $vw \in \Ecal$, $e_{vw}$ is a vector of length $|\Ecal|$ that is 1 at the index for $vw$ and 0 elsewhere. More generally, for $S \subset \Vcal$, $\chi_S$ is a vector of length $|\Vcal|$ that is 1 on $S$ and 0 elsewhere; and for $T \subset \Ecal$, $\chi_T$ is a vector of length $|\Ecal|$ that is 1 on $T$ and 0 elsewhere.

\paragraph{Outline of Contributions.}
In this paper we analyze the underlying structure of optimization problems with total variation-like regularization and exploit this structure to devise efficient algorithms for these problems and variants of these problems. 
In Section \ref{sec:structure} we characterize in Theorem \ref{thm:graver-basis} the structure of the Graver basis for a linearization of the unconstrained version of \eqref{eq:General-Problem} as corresponding to the collection of connected subgraphs of $\Gcal$. This gives an alternate proof of optimality for Algorithm \ref{alg:NL-Augmentation}. We also discuss how to set up the augmentation subproblem solves in the algorithm.
In Section \ref{sec:trust-region} we describe how to sample improving moves from the Graver basis by randomizing the simplex method  for these subproblems. We demonstrate how this can be used to construct a randomized algorithm for approximately solving \eqref{eq:General-Problem}, and we explore various properties of this randomized algorithm.
In Section \ref{sec:experiments} we establish a collection of experiments on two problems that demonstrate an improvement in both objective and solve time when compared with standard integer program solvers.

\section{Structure in Total Variation-Regularized Problems}\label{sec:structure}

We begin our study by first establishing properties of our main problem \eqref{eq:General-Problem} in the absence of the budget constraints $H(x) \le \Delta$. We start by giving a review of the Graver basis and discuss its use as an optimality certificate for integer programs. We then establish the Graver basis associated with our unconstrained problem as corresponding to induced connected subgraphs of $\Gcal$ and touch on how its structure impacts augmentation algorithms on \eqref{eq:General-Problem}. In spite of the exponential size of the Graver basis, we demonstrate the augmentation subproblems can be solved efficiently by exploiting total unimodularity inherent in the problem.  We also discuss a specific linearization when the function $G$ corresponds directly to the total variation which we will make use of when we study \eqref{eq:General-Problem} in Section \ref{sec:trust-region}.

\subsection{Graver Bases and the Basic Augmentation Algorithm}




The central tool that we employ for optimizing \eqref{eq:General-Problem} is the augmentation algorithm.
Augmentation approaches differ from the standard branch-and-bound approach to solving integer programs in that solutions are produced by traveling through the integral lattice within the feasible polytope. The basic structure of such an approach is as follows: Given an initial feasible point $x_0$ and a collection of augmenting moves $\mathcal{U}$, determine whether there exists a move $u \in \mathcal{U}$ that improves the objective value (i.e., $J(x_0 + u) < J(x_0)$) and maintains feasibility. If such a move exists, we take it and set $x_1 = x_0 + u$ and repeat; otherwise, we stop. Note that Algorithm \ref{alg:NL-Augmentation} is an augmentation algorithm taking $\mathcal{U}$ to be all subsets of $\{0,1\}^{\Vcal}$.

In our study we take $\mathcal{U}$ to be the Graver basis (introduced in \cite{graver1975foundations}; see \cite{de2012algebraic} for a comprehensive overview) of the constraint matrix.  To introduce the Graver basis, we first introduce a special partial ordering on $\Z^n$. 
That is, we say that for $x,y \in \Z^n$, $x \sqsubseteq y$ if and only if $x_iy_i \ge 0$ and $|x_i| \le |y_i|$ for all $i=1,\ldots,n$. 
The first condition requires that for two points to be comparable, they need to lie in the same orthant; and the second condition can be understood as requiring $x$ to be closer to the origin with respect to all coordinates. 
If $x \sqsubseteq y$, we say that $y$ majorizes $x$. 
We also denote the integer kernel of a matrix $A \in \mathbb{Z}^{m\times n}$ as $\text{ker}_{\mathbb{Z}}(A) := \text{ker}(A) \cap \mathbb{Z}^n$. We define the Graver basis as follows.

\begin{definition}
    Given a matrix $A \in \Z^{m\times n}$, the \textbf{Graver basis} of $A$, $\Gfrak_A$ is the set of $\sqsubseteq$-minimal elements of $\ker_{\Z}(A) \setminus \{0\}$.
\end{definition}

The elements of the Graver basis are also known as ``primitive'' elements because they cannot be further decomposed into smaller Graver basis elements.
Importantly, the Graver basis is an optimality certificate for any choice of convex objectives $F_v$ and $G_{uv}$ in \eqref{eq:sub}. 
By this we mean that if we have a suboptimal point, there always exists a Graver basis element that can be added to the point to improve the objective while maintaining feasibility. The absence of such a move is proof of global optimality. 

Graver basis methods have been less-used approaches in integer programming primarily because of the generally intractable computational burden needed to fully compute a Graver basis; however, in certain domains these methods have produced efficient algorithms. 
In particular, Graver basis methods have given efficient algorithms for $N$-fold programming \cite{de2008n,hemmecke2013n}, certain classes of 2-stage stochastic problems \cite{hemmecke2003decomposition, klein2020complexity}, and certain quadratic optimization problems \cite{lee2012quadratic}.

We now set up the \textbf{Graver augmentation subproblem} for minimizing $J(x)$ subject to $Ax = b$ and $x \in \{0,1,\ldots,Q\}^n$ about some feasible point $\hat{x}$. This is detailed in \eqref{eq:Aug-Subproblem}:
\begin{equation}
\begin{aligned}
    \min_{g, \alpha} \qquad & J(\hat{x} + \alpha g) \\
    \text{s.t.} \qquad & \hat{x} + \alpha g \in \{0,1,\ldots,Q\}^n,\\
                & g \in \mathfrak{G}_A, \alpha \in \mathbb{Z}_{\ge 0}.
\end{aligned} \label{eq:Aug-Subproblem}
\end{equation}
A global optimization routine can be devised by repeatedly solving \eqref{eq:Aug-Subproblem} and augmenting with the solution until no more improving moves can be found. This routine is presented in Algorithm \ref{alg:graver-augmentation}. Provided that the feasible space is bounded and therefore finite, Algorithm \ref{alg:graver-augmentation} produces a sequence of feasible iterates with strictly decreasing objective terminating in a finite number of steps at the global optimum. If $n$ is the number of variables and $J_{\Delta}$ is the maximal difference in objective between two feasible points, at most $O(n\log J_{\Delta})$ iterations are needed \cite{de2012algebraic}.

\begin{algorithm}
\caption{Conceptual Graver Basis Augmentation Algorithm} \label{alg:graver-augmentation}
\KwData{Feasible $x^{(0)}$, Graver basis $\Gfrak_A$}
$k \gets 0$\;
\While{True}
{    $(g, \alpha) \gets $ solution of Graver Augmentation Problem \eqref{eq:Aug-Subproblem} with $\hat{x} = x^{(k)}$\;
    \eIf{$J(x^{(k)} + \alpha g) = J(x^{(k)})$} {
        Return $x^{(k)}$ \Comment{Move did not improve objective}
    }{
        $x^{(k+1)} \gets x^{(k)} + \alpha g$\;
    }
    $k \gets k + 1$\;
}
\end{algorithm}

Unfortunately, the conceptual algorithm~\ref{alg:graver-augmentation} is largely of theoretical value because it requires a priori knowledge of the Graver basis, which in the worst case may grow exponentially for many problems. To derive a computationally practical implementation, we next analyze the Graver structure of \eqref{eq:sub} and then show how this leads to a practical algorithm.

\subsection{Polyhedral Structure of the Constraints}
To analyze the properties of the unconstrained version of problem \eqref{eq:General-Problem}, we first perform a variable substitution, introducing a variable $a_{uv} = x_u - x_v$ for each edge $uv \in \Ecal$. In this setting our problem becomes
\begin{subequations}
\begin{align}
    \min_{x,a} \qquad &\sum_{v \in \Vcal} F_v(x_v) + \sum_{uv \in \Ecal} G_{uv}(a_{uv}) \\
    \text{s.t.} \qquad & x_u - x_v = a_{uv}, \quad uv \in \Ecal \\
                & x \in \{0,1,\ldots,Q\}^{\Vcal}, a \in \mathbb{Z}^{\Ecal} .
\end{align}\label{eq:sub}%
\end{subequations}
We denote the polytope of the linear relaxation of \eqref{eq:sub} by $\mathcal{P}_{A_{TV}}$.
Our objective function is now separable in the variables $x$ and $a$; and the constraint matrix, which we denote $A_{TV}$, takes the form
\begin{equation}
    A_{TV} = \begin{bmatrix} B & -I \end{bmatrix},
\end{equation}
where $I$ is the $|\Ecal| \times |\Ecal|$ identity matrix and $B$ is the transpose of an oriented incidence matrix for the underlying graph $\Gcal$, namely, the $|\Ecal| \times |\Vcal|$ matrix where $B_{uv,w}$ is $1$ if $u=w$, $-1$ if $v = w$, and $0$ otherwise.
It is well known that the oriented incidence matrix is totally unimodular; and since total unimodularity is preserved under concatenation with an identity matrix, $A_{TV}$ is totally unimodular. We then have the following remark following standard integer programming theory.
\begin{remark}\label{rmk:verts-are-integral}
    The vertices of $\mathcal{P}_{A_{TV}}$ are integral; and for $(x, a) \in \mathcal{P}_{A_{TV}}$, $x \in \{0,Q\}^{\Vcal}$.
\end{remark}

The total unimodularity also has consequences in terms of the edges of the underlying polytope. From \cite{sturmfels1997variation}, every edge (understood as a vector between two vertices) is a multiple of a universal Gr\"obner basis element (see \cite{sturmfels1997variation} for a definition). Since the Graver basis contains the universal Gr\"obner basis, we have the following statement.
\begin{remark}\label{rmk:edges-are-graver}
    Suppose $u,v$ are two adjacent vertices of the integer polytope given by constraints $Ax = b$ and $x \ge 0$, where $A$ is unimodular. Then, for any choice of $b \in \Z^n$, $u-v$ is an integral multiple of some element in the Graver basis of $A$.
\end{remark}

\subsection{Graver Structure of Augmented Oriented Incidence Matrix}
We expound upon the Graver basis structure of \eqref{eq:sub}, which will establish the fundamental units on which we can derive an efficient augmentation algorithm. In this context the Graver basis takes a particular form related to the connectivity of the underlying graph. That is, the fundamental units of the kernel of $A_{TV}$ are vectors that change $x_v$ uniformly on a connected subset of vertices $S$, as described in the following theorem.


\begin{theorem}\label{thm:graver-basis}
    The Graver basis of $A_{TV}$, denoted by $\Gfrak_{A_{TV}}$,  comprises precisely the points $(x,a)$, where $x = \chi_S$ for $S \subset \Vcal$, where $S$ induces a connected subgraph of $\Gcal$, and $a = \chi_{\delta^+(S)} - \chi_{\delta^-(S)}$ as well as the negatives of such points.
\end{theorem}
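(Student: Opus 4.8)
The plan is to exploit the fact that every element of $\ker_{\Z}(A_{TV})$ is pinned down by its vertex part. Since $A_{TV}(x,a) = Bx - a$, membership in $\ker_{\Z}(A_{TV})$ forces $a = Bx$, i.e. $a_{uv} = x_u - x_v$ for every edge $uv$. Thus a kernel element is exactly a pair $(x, Bx)$ with $x \in \Z^{\Vcal}$, and because $(B\chi_S)_{uv} = (\chi_S)_u - (\chi_S)_v = \chi_{\delta^+(S)} - \chi_{\delta^-(S)}$, the elements in the statement are precisely those with $x = \pm\chi_S$. I would then prove the two inclusions separately, reducing to the case $x \ge 0$ via the symmetry $(x, Bx) \mapsto (-x, -Bx)$, which preserves $\sqsubseteq$-minimality.

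For the forward inclusion (each stated element is $\sqsubseteq$-minimal, hence lies in $\Gfrak_{A_{TV}}$), suppose $S$ induces a connected subgraph and $(y, By) \sqsubseteq (\chi_S, B\chi_S)$ is a nonzero kernel element. Because $\chi_S$ is $0/1$-valued, $y \sqsubseteq \chi_S$ forces $y = \chi_{S'}$ for some nonempty $S' \subseteq S$. If $S' \neq S$, weak connectivity of $S$ yields an edge $uv$ with one endpoint in $S'$ and the other in $S \setminus S'$; on this edge $B\chi_S$ vanishes (both endpoints lie in $S$) while $(B\chi_{S'})_{uv} = \pm 1$, contradicting the requirement $|By| \le |B\chi_S|$ in $By \sqsubseteq B\chi_S$. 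Hence $S' = S$, and minimality follows.

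For the converse (every Graver element has the stated form), I would decompose an arbitrary nonzero minimal element through level sets. Given minimal $(x, Bx)$, assume $x$ takes a positive value (the negative case is symmetric) and consider a nonempty superlevel set $S_t = \{v : x_v \ge t\}$, letting $C$ be one of its weakly connected components. The goal is to show $(\chi_C, B\chi_C) \sqsubseteq (x, Bx)$; minimality then forces $(x, Bx) = (\chi_C, B\chi_C)$ with $C$ connected, which is exactly the asserted form.

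The main obstacle is verifying the conformal domination on the edge part, $B\chi_C \sqsubseteq Bx$; the vertex part $\chi_C \sqsubseteq x$ is immediate since $x_v \ge t \ge 1$ on $C$. The key is a monotonicity/sign argument together with the component refinement: on an edge $uv$ with $(B\chi_C)_{uv} = 1$ we have $u \in C$ and $v \notin C$, and restricting to a \emph{connected component} is essential here, as it guarantees $v \notin S_t$ (a component has no edges to the remainder of its own level set), whence $x_v \le t-1 < t \le x_u$ and $x_u - x_v \ge 1$ agrees in sign with and dominates $(B\chi_C)_{uv}$. Edges with $(B\chi_C)_{uv} = -1$ are handled symmetrically, while all remaining edges contribute $0$; this establishes $B\chi_C \sqsubseteq Bx$ and completes the characterization.
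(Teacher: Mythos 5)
Your proposal is correct and follows essentially the same route as the paper: both reduce to the observation that a kernel element is determined by its vertex part $x$, prove minimality of the stated elements by finding an edge between a proper subset $S'$ and $S\setminus S'$ on which $B\chi_S$ vanishes but $B\chi_{S'}$ does not, and prove the converse by showing a connected component of a level set yields a conformally dominated element (your superlevel set $S_t$ is a minor variant of the paper's use of the maximum-value level set in Lemma \ref{lem:TV-majorizer}, and the sign/monotonicity argument on boundary edges is identical).
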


A visualization of the Graver basis for a $3\times 2$ grid graph is presented in Figure \ref{fig:3x2-graver}. In general, it is not practical to construct the Graver basis, which can grow exponentially with size, as illustrated in Table~\ref{tab:connected-graph-count}. 

To prove Theorem \ref{thm:graver-basis}, we first remark that the integer kernel of $A_{TV}$ is easy to characterize: for $x \in \mathbb{Z}^{\Vcal}$, the variables $a_{uv}$ are exactly determined as $a_{uv} = x_u - x_v$. From this, we simply need to determine the $\sqsubseteq$-minimal points $(x,a)$ of $\text{ker}_{\mathbb{Z}}(A_{TV}) \setminus \{0\}$. The following lemma aids in this task.

\begin{lemma} \label{lem:TV-majorizer}
    Let $(x, a) \in \ker_{\Z}(A_{TV})$. Let $k = \max_{v:x_v > 0} x_v$ and $S$ be the vertices of any connected component of the subgraph induced by $\{v \in \Vcal : x_v = k\}$. Then $(\chi_S, \chi_{\delta^+(S)} - \chi_{\delta^-(S)}) \sqsubseteq (x, a)$. An analogous result holds for $k = \min_{v: x_v < 0} x_v$ but with $(-\chi_S, -\chi_{\delta^+(S)} + \chi_{\delta^-(S))})$ instead. 
\end{lemma}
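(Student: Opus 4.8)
The plan is to verify the two defining conditions of the partial order $\sqsubseteq$ coordinate by coordinate, treating the $x$-coordinates and the $a$-coordinates separately. First I would record that membership in $\ker_{\Z}(A_{TV})$ forces $a_{uv} = x_u - x_v$ for every edge $uv \in \Ecal$, so that the candidate majorizer $(\chi_S, \chi_{\delta^+(S)} - \chi_{\delta^-(S)})$ is itself exactly of this form (its $a$-entry on $uv$ equals $(\chi_S)_u - (\chi_S)_v$), confirming it lies in the kernel and reducing the whole claim to checking same-orthant and magnitude comparisons entry by entry.

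For the $x$-coordinates the check is immediate: on $v \in S$ we have $x_v = k \ge 1$, so $(\chi_S)_v = 1$ shares its sign with $x_v$ and $1 \le |x_v|$; off $S$ the entry of $\chi_S$ is $0$, which trivially satisfies both conditions. The substance lies in the $a$-coordinates, which I would handle by a four-way case split on an edge $uv$ according to how its endpoints meet $S$. When both endpoints lie in $S$, or neither does, the candidate's $a$-entry is $0$ and nothing is required. The two remaining cases are the boundary edges, and here is the crux: I claim any neighbor of a vertex of $S$ that lies outside $S$ in fact has $x$-value strictly less than $k$. Granting this, an edge leaving $S$ has candidate entry $+1$ and true entry $a_{uv} = k - x_v \ge 1$, while an edge entering $S$ has candidate entry $-1$ and true entry $a_{uv} = x_u - k \le -1$; in both cases the signs agree and the candidate's magnitude $1$ does not exceed $|a_{uv}|$, which is exactly what $\sqsubseteq$ demands.

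The key step --- and the only place the hypothesis is used in full --- is establishing that boundary neighbors sit strictly below level $k$. Here I would use that $k = \max_{v : x_v > 0} x_v$ is in fact the global maximum of $x$ (every non-positive entry is dominated by $k > 0$), so no coordinate exceeds $k$; and that $S$ is a \emph{whole} connected component of the subgraph induced by the level set $\{v : x_v = k\}$. If a neighbor $w \notin S$ of some $s \in S$ also satisfied $x_w = k$, then $s$ and $w$ would be adjacent vertices of that induced subgraph, hence in the same (weakly) connected component, forcing $w \in S$ --- a contradiction. Thus $x_w \ne k$, and combined with $x_w \le k$ this yields the strict inequality $x_w \le k - 1$ needed above. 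This is precisely why $S$ must be an entire component rather than an arbitrary connected subset: a boundary neighbor at level $k$ would make $a_{uv} = 0$ while the candidate entry is $\pm 1$, violating the magnitude condition. Finally, the minimum-value statement follows by symmetry, applying the result just proved to $(-x, -a) \in \ker_{\Z}(A_{TV})$ and negating both sides of the resulting majorization.
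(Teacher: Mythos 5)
Your proof is correct and follows essentially the same route as the paper's: the $x$-part of the majorization is immediate, and the $a$-part reduces to checking that boundary edges of $S$ carry entries of the right sign and magnitude at least $1$. The only difference is that you spell out explicitly why a neighbor outside $S$ has value strictly below $k$ (global maximality of $k$ plus $S$ being a whole component of the level set), a step the paper asserts tersely with ``$x_v < k$ (as $u \in S$ and $v \notin S$).''
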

\begin{proof}
    We prove the first part only since the second can be proven in the same way with appropriate sign changes. Let $(x,a)$, $k$, and $S$ be as stated in the lemma. It is immediate that $\chi_S \sqsubseteq x$, so we need only check that $\chi_{\delta^+(S)} - \chi_{\delta^-(S)} \sqsubseteq a$. Note that since $k$ is maximal, we have for each $uv \in \delta^+(S)$, $a_{uv} = x_u - x_v \ge 1$ since $x_u = k$ and $x_v < k$ (as $u \in S$ and $v \notin S$). Similarly, for $uv \in \delta^-(S)$, where $u \notin S$ and $v\in S$, $a_{uv} = x_u - x_v \le -1$. Therefore, $\chi_{\delta^+(S)} - \chi_{\delta^-(S)} \sqsubseteq a$, and the statement is proven.
\end{proof}

Using this lemma, we prove Theorem~\ref{thm:graver-basis}.

\begin{proof}[Proof of Theorem \ref{thm:graver-basis}] 
    Let $(x, a) \in \ker_{\Z}(A) \setminus \{0\}$. From Lemma \ref{lem:TV-majorizer} we can construct an element $g_S = \pm (\chi_S, \chi_{\delta^+(S)} - \chi_{\delta^-(S)})$, where $g_S \sqsubseteq (x, a)$. If $g_S \ne (x, a)$, then $(x,a)$ cannot be in the Graver basis because $(x,a)$ is not $\sqsubseteq$-minimal. Hence, any $(x,a)$ that is not of the form $\pm (\chi_S, \chi_{\delta^+(S)} - \chi_{\delta^-(S)})$ cannot be a Graver basis element.

    Now we verify that these moves are $\sqsubseteq$-minimal. Suppose for given $g_S = (\chi_S, \chi_{\delta^+(S)} - \chi_{\delta^-(S)})$ we can find $g_T = (\chi_T, \chi_{\delta^+(T)} - \chi_{\delta^-(T)})$ with $g_T \sqsubseteq g_S$ and $g_T \ne g_S$. Then we must have $T \subset S$ where containment is strict. Since $S$ is a weakly connected component, there exists an edge $uv$ between $T$ and $S \setminus T$. Both $u$ and $v$ are in $S$, so $g_S$ is 0 on $uv$; but since only one is in $T$, $g_T$ must be nonzero on $uv$. This contradicts $g_T \sqsubseteq g_S$, and hence $g_S$ must be in the Graver basis.
\end{proof}

\begin{figure}
    \centering
    \includegraphics[width=\linewidth]{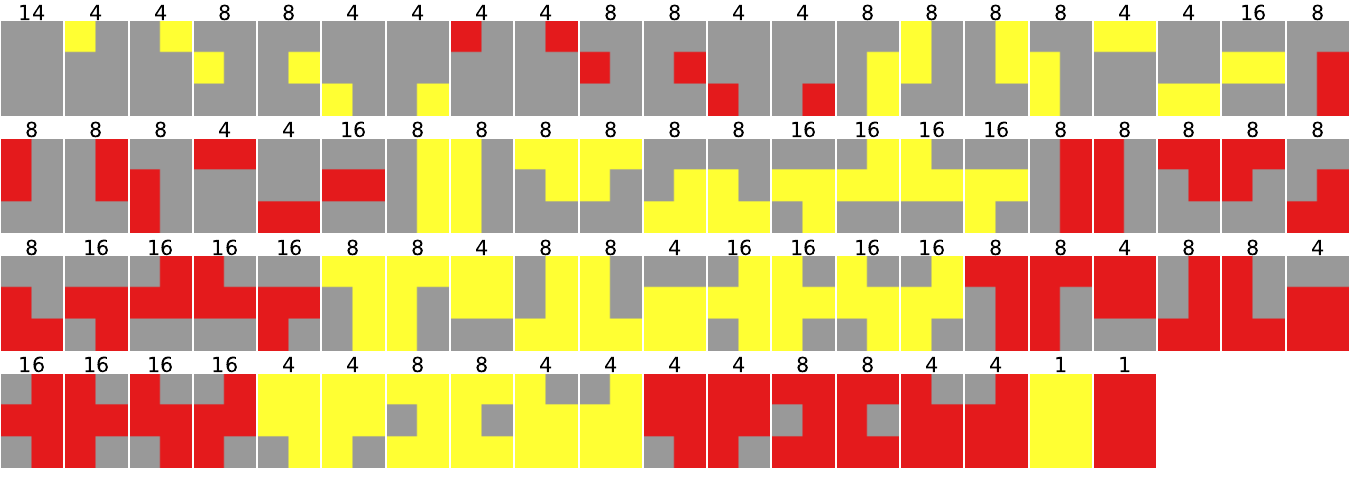}
    \caption{Projection of Graver basis $\Gfrak_{A'_{TV}}$ for a $3\times 2$ grid onto the $x$ variables. The colored pixels indicate vertices $S$ inducing a connected subgraph where $x_v = 1$ if $v$ is yellow, $x_v = -1$ if $v$ is red and 0 otherwise. The number above a given move $x$ corresponds to how many moves in $\Gfrak_{A'_{TV}}$ project onto $x$. The projection of $\Gfrak_{A_{TV}}$ is given by taking one of each element from the second element onward.}
    \label{fig:3x2-graver}
\end{figure}
Given that the Graver basis serves as an optimality certificate, we have the following corollaries characterizing optimality for the unconstrained version of \eqref{eq:General-Problem}.
\begin{corollary}\label{cor:graver-optimality}
    Suppose $x \in \{0,1,\ldots,Q\}^{\Vcal}$ satisfies $J(x) \le J(x \pm \chi_S)$, where $S$ induces a connected subgraph of $\Gcal$ and $x \pm \chi_S$ is feasible. Then $x$ is a globally optimal solution for the unconstrained version of \eqref{eq:General-Problem}.
\end{corollary}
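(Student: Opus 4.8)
The plan is to obtain the corollary as a concrete instance of the optimality-certificate property of Graver bases, with Theorem~\ref{thm:graver-basis} supplying the explicit form of the certificate. First I would record the equivalence between the unconstrained version of \eqref{eq:General-Problem} and the reformulation \eqref{eq:sub}: writing $\tilde J(x,a):=\sum_{v\in\Vcal}F_v(x_v)+\sum_{uv\in\Ecal}G_{uv}(a_{uv})$, this is a \emph{separable} convex function on $\Z^{\Vcal}\times\Z^{\Ecal}$, and on any constraint-feasible point (where $a_{uv}=x_u-x_v$) it agrees with $J(x)$. Hence it suffices to show that $(x,a)$ is globally optimal for \eqref{eq:sub}, where $a$ is the edge vector induced by $x$.

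I would argue the contrapositive. Suppose $x$ is not globally optimal, let $x^\star$ be a minimizer with induced edge vector $a^\star$, and set $v:=(x^\star,a^\star)-(x,a)\in\ker_{\Z}(A_{TV})$. By Graver's conformal decomposition, $v=\sum_i\alpha_i g_i$ with $g_i\in\Gfrak_{A_{TV}}$, $\alpha_i\in\Z_{>0}$, and each $g_i\sqsubseteq v$. The single nontrivial ingredient is the superadditivity of a separable convex function along conformal directions: if $p,q$ lie in the same orthant then $\tilde J(y+p+q)+\tilde J(y)\ge\tilde J(y+p)+\tilde J(y+q)$ (coordinatewise, this is just monotonicity of the increments of a convex function). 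Iterating over the conformal summands gives
\[
\tilde J\big((x,a)+v\big)-\tilde J(x,a)\ \ge\ \sum_i\alpha_i\Big(\tilde J\big((x,a)+g_i\big)-\tilde J(x,a)\Big).
\]
The left-hand side equals $J(x^\star)-J(x)<0$, so at least one summand is negative; that is, some $g_i$ satisfies $\tilde J((x,a)+g_i)<\tilde J(x,a)$. Since $g_i\sqsubseteq v$, the point $(x,a)+g_i$ lies coordinatewise between the two box-feasible points $(x,a)$ and $(x^\star,a^\star)$ and lies in $\ker_{\Z}(A_{TV})$, so it is feasible for \eqref{eq:sub}.

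It then remains to translate this improving move into the corollary's language. By Theorem~\ref{thm:graver-basis}, $g_i=\pm(\chi_S,\chi_{\delta^+(S)}-\chi_{\delta^-(S)})$ for some $S$ inducing a connected subgraph of $\Gcal$, and $(x,a)+g_i=(x\pm\chi_S,\,a')$ where $a'$ is the edge vector induced by $x\pm\chi_S$; consequently $\tilde J((x,a)+g_i)=J(x\pm\chi_S)$ and $x\pm\chi_S$ is feasible. We have therefore produced a connected $S$ with $x\pm\chi_S$ feasible and $J(x\pm\chi_S)<J(x)$, contradicting the hypothesis $J(x)\le J(x\pm\chi_S)$. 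Hence $x$ must be globally optimal.

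I expect the only genuinely non-routine step to be the conformal-decomposition argument, which is exactly what reduces optimality to testing single Graver elements at unit step rather than arbitrary multiples $\alpha g$; this is the standard optimality-certificate property already invoked in the text, so I would either cite it (e.g.\ \cite{de2012algebraic}) or retain the two-line convexity argument above. The remaining work is bookkeeping: confirming that $(x,a)+g_i$ respects the box $\{0,\dots,Q\}^{\Vcal}$, which follows directly from $g_i\sqsubseteq v$. I would also emphasize that the real content of the corollary beyond the generic certificate is precisely the restriction of the tested moves to connected $S$, which is what Theorem~\ref{thm:graver-basis} provides.
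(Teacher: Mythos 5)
Your proposal is correct and takes essentially the same route as the paper: the paper obtains the corollary directly from Theorem~\ref{thm:graver-basis} combined with the Graver-basis optimality-certificate property it cites from \cite{de2012algebraic}, and that cited property is exactly the conformal-decomposition-plus-superadditivity argument you spell out. Your version simply inlines the standard certificate proof (and the bookkeeping identifying moves on \eqref{eq:sub} with $x \pm \chi_S$) rather than invoking it as a black box.
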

This also immediately gives an alternate proof that Algorithm \ref{alg:NL-Augmentation} converges to the globally optimal solution.
\begin{theorem}[first shown in \cite{kolmogorov2009new}]\label{thm:global-convergence}
    Algorithm 1 converges to the globally optimal solution of the unconstrained version of \eqref{eq:General-Problem} in a finite number of iterations.
\end{theorem}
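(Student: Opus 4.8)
The plan is to combine three ingredients: monotonicity of $J$ along the iterates, finiteness of the feasible set to force termination, and Corollary~\ref{cor:graver-optimality} to certify that the terminal point is globally optimal. The whole argument hinges on the observation that $\Delta x = 0$ is feasible for both the ``up'' and the ``down'' subproblems (it leaves the current point unchanged), so each half-step can only decrease the objective: $J(x^{(k+1)}) \le J(x^{(k)})$ and $J(x^{(k+2)}) \le J(x^{(k+1)})$. Hence $J$ is non-increasing along $x^{(0)}, x^{(1)}, x^{(2)}, \ldots$.

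First I would fix the tie-breaking in the two $\text{argmin}$ operations, selecting in each case a minimizer of \emph{minimal support}. With this convention, whenever the current point admits no strict improvement in a given direction the corresponding half-step returns $\Delta x = 0$ (the empty-support minimizer), and conversely any nonzero half-step strictly decreases $J$, since otherwise $0$, having smaller support, would have been selected. For finiteness I would then argue that every loop iteration that does not trigger the stopping rule yields a strict decrease $J(x^{(k+2)}) < J(x^{(k)})$: if $x^{(k+2)} \ne x^{(k)}$, at least one half-step is nonzero, and a nonzero half-step strictly lowers $J$ while the other never raises it. As $\{0,\ldots,Q\}^{\Vcal}$ is finite, $J$ takes finitely many values, so only finitely many such strict decreases can occur and the algorithm halts.

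For optimality at termination I would first show the stopping condition forces $x^{(k+1)} = x^{(k)}$. Since $x^{(k+2)} = x^{(k)}$ forces $\Delta x^{\text{up}} = \Delta x^{\text{down}} =: d$, if $d \ne 0$ then the up-step strictly decreases $J$ while the down-step cannot raise it back (as $0$ is feasible), contradicting $J(x^{(k+2)}) = J(x^{(k)})$. Hence $d = 0$ and both subproblems are solved by $\Delta x = 0$ \emph{at the same point} $x^{(k)}$. Nullity of the up-problem gives $J(x^{(k)}) \le J(x^{(k)} + \chi_S)$ for every $S$ with $x^{(k)} + \chi_S \le Q$, and nullity of the down-problem gives $J(x^{(k)}) \le J(x^{(k)} - \chi_S)$ for every $S$ with $x^{(k)} - \chi_S \ge 0$; in particular these hold for every connected $S$, which are precisely the hypotheses of Corollary~\ref{cor:graver-optimality}. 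Therefore $x^{(k)}$ is globally optimal.

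The main obstacle is this last step, specifically the need to verify both optimality conditions at the \emph{same} point. The down-problem is posed at $x^{(k+1)}$ rather than $x^{(k)}$, so if a nonzero but non-improving (tie) up-move were permitted one could not transport the ``$-\chi_S$'' inequality back to $x^{(k)}$: reaching $x^{(k)} - \chi_S$ from $x^{(k+1)}$ may require a decrement outside $\{0,1\}^{\Vcal}$ whenever $S$ meets the support of the up-move. The minimal-support tie-breaking, which is exactly what a standard minimum-cut subroutine returns, closes this gap by guaranteeing $x^{(k+1)} = x^{(k)}$ at termination; I would state this convention explicitly so that termination genuinely coincides with the Graver optimality certificate.
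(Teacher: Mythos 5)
Your proof is correct, and its backbone is the same as the paper's: the subproblems in Algorithm \ref{alg:NL-Augmentation} search all moves $\pm\chi_X$ with $X \subseteq \Vcal$, a superset of the Graver moves, so termination delivers exactly the hypotheses of Corollary \ref{cor:graver-optimality}. The difference is one of rigor rather than route. The paper's proof is a two-sentence appeal to this superset property; it silently assumes both that the algorithm terminates finitely (which the theorem explicitly claims) and that, at termination, the up- and down-certificates hold at the \emph{same} point. You prove both: finiteness via strict descent together with finiteness of $\{0,\ldots,Q\}^{\Vcal}$, and the certificate question via your minimal-support tie-breaking convention. Your observation that the down-subproblem is posed at $x^{(k+1)}$ rather than $x^{(k)}$ --- so that a nonzero tie up-move would block transporting the $-\chi_S$ inequalities back to $x^{(k)}$ --- is a genuine subtlety the paper glosses over; with arbitrary argmin selection one would need additional machinery (essentially the $L^{\natural}$-convexity analysis of \cite{kolmogorov2009new}) to rule out termination at a non-optimal point. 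Your fix, preferring the zero move whenever it is a minimizer (which a standard minimum-cut subroutine indeed supplies), is the cleanest way to close that gap; the only caveat is that, strictly speaking, it proves the theorem for this refinement of Algorithm \ref{alg:NL-Augmentation} rather than for an arbitrary tie-breaking rule, so the convention should be stated alongside the algorithm, as you propose.
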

\begin{proof}
    In Algorithm \ref{alg:NL-Augmentation} each augmentation subproblem optimizes over all moves $\pm \chi_X$, where $X$ is any subset of $\Vcal$. This is a strict superset of the collection of Graver moves and therefore an optimality certificate.
\end{proof}

We also have a natural corollary regarding Algorithm \ref{alg:NL-Augmentation}.
\begin{corollary}
    If we modify Algorithm \ref{alg:NL-Augmentation} so that each subproblem searches only over vectors $\chi_S$, where $S$ induces a connected subgraph, it converges to the global optimum. 
\end{corollary}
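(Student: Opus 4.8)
The plan is to reduce the claim to the optimality certificate already established in Corollary \ref{cor:graver-optimality}, exactly paralleling the proof of Theorem \ref{thm:global-convergence}. The modified routine is nothing but Algorithm \ref{alg:NL-Augmentation} with the feasible region of each subproblem shrunk from all of $\{0,1\}^{\Vcal}$ to those indicator vectors $\chi_S$ whose support $S$ induces a connected subgraph. I would organize the argument in two parts: (i) show that whenever the modified algorithm halts, the current iterate is globally optimal, and (ii) show that it must halt after finitely many iterations.

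For (i), I observe that the algorithm terminates only when neither the connected ``up'' subproblem nor the connected ``down'' subproblem produces a strict improvement. Since the up-subproblem ranges over increments $+\chi_S$ and the down-subproblem over decrements $-\chi_S$, halting means precisely that $J(x) \le J(x + \chi_S)$ and $J(x) \le J(x - \chi_S)$ for every connected-inducing $S$ with $x \pm \chi_S$ feasible. This is verbatim the hypothesis of Corollary \ref{cor:graver-optimality}, which therefore certifies that $x$ is a global optimum of the unconstrained version of \eqref{eq:General-Problem}. Equivalently, by Theorem \ref{thm:graver-basis} the set of moves searched is exactly the projection onto the $x$-coordinates of the Graver basis $\Gfrak_{A_{TV}}$, which is an optimality certificate, so the argument is the same one used for Theorem \ref{thm:global-convergence} but with the search set trimmed down to the certificate itself rather than a superset of it.

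For (ii), I would note that each subproblem still admits the zero move (corresponding to $S = \emptyset$), so the objective is nonincreasing along the iterates, and any iteration that does not trigger the stopping rule strictly decreases $J$. Because the feasible box $\{0,1,\ldots,Q\}^{\Vcal}$ is finite, $J$ can take only finitely many values, so only finitely many strict decreases are possible and the process must terminate.

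The step that needs the most care — and which is the real content of the corollary — is guaranteeing that restricting attention to connected supports never strands the algorithm at a nonoptimal point: one must know that if some (possibly disconnected) increment $\chi_X$ improves $J$, then so does at least one connected increment. The underlying reason is a decomposition over the connected components $X_1,\ldots,X_m$ of $X$: because no edge joins two distinct components, every boundary edge of $X$ lies in $\delta(X_i)$ for exactly one $i$, and $F$ is separable, giving $J(x+\chi_X) - J(x) = \sum_{i=1}^m \bigl(J(x+\chi_{X_i}) - J(x)\bigr)$, so a negative left-hand side forces a negative summand. This additivity is exactly what Corollary \ref{cor:graver-optimality} (via Theorem \ref{thm:graver-basis}) encapsulates, so invoking the corollary lets me avoid redoing the computation. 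The only remaining wrinkle is tie-breaking when a nontrivial connected move leaves $J$ unchanged; this is handled by preferring the zero move on ties, or simply by noting that the monotone, finitely-valued sequence $J(x^{(k)})$ must stabilize, at which point no improving connected move exists and the iterate is optimal by Corollary \ref{cor:graver-optimality}.
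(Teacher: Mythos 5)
Your proposal is correct and takes essentially the same route as the paper: the paper states this corollary as an immediate consequence of Corollary \ref{cor:graver-optimality} (the connected-support moves $\pm\chi_S$ being exactly the Graver certificate of Theorem \ref{thm:graver-basis}), which is precisely your part (i). Your termination and tie-breaking discussion in part (ii) is detail the paper leaves implicit, and your component-decomposition paragraph only re-derives what Corollary \ref{cor:graver-optimality} already encapsulates, so there is no substantive divergence.
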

This corollary suggests that we could potentially speed up Algorithm \ref{alg:NL-Augmentation} by replacing the subproblems with searches over these reduced sets. However, the subproblems  are already efficiently solvable with minimum cut routines (see \cite{kolmogorov2009new}), and imposing a connectivity requirement over moves may actually make the subproblems harder to solve. Furthermore, for Algorithm \ref{alg:NL-Augmentation} we have a good bound on the number of iterations, $2Q+2$ \cite{kolmogorov2009new}, whereas for this amended algorithm we have a weaker bound $O(n J_\Delta)$ \cite{de2012algebraic}, where $n = |\Vcal| + |\Ecal|$ and $J_\Delta$ is the range of function $J$.

From Remark \ref{rmk:edges-are-graver} we also have an immediate corollary that characterizes the edges of the polytope of the linear relaxation of \eqref{eq:sub}, $\mathcal{P}_{A_{TV}}$.

\begin{corollary}\label{cor:edges-are-connected}
    The edges of $\mathcal{P}_{A_{TV}}$ when understood as vectors are integral multiples of elements in $\mathfrak{G}_{A_{TV}}$. For vertices $(x_1, a_1)$ and $(x_2, a_2)$ to be adjacent, $x_1 - x_2$ must be an integral multiple of $\chi_S$ for some $S \subset \Vcal$, which induces a connected subgraph.
\end{corollary}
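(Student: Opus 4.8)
The plan is to obtain the statement by chaining Remark \ref{rmk:edges-are-graver} with the explicit form of the Graver basis from Theorem \ref{thm:graver-basis}. There are two claims to establish: that each edge of $\Pcal_{A_{TV}}$, read as the vector between its two endpoints, is an integral multiple of a Graver element; and that the $x$-part of that vector equals $\lambda\chi_S$ for some connected $S$. The second claim is just the first with the shape of the Graver elements substituted in, so the substantive work is establishing the first claim and then doing light bookkeeping on the coordinate blocks.

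First I would recall that $A_{TV} = [\,B \mid -I\,]$ is totally unimodular, hence unimodular, so the hypothesis of Remark \ref{rmk:edges-are-graver} on the constraint matrix is met. Next, let $(x_1,a_1)$ and $(x_2,a_2)$ be adjacent vertices of $\Pcal_{A_{TV}}$; by Remark \ref{rmk:verts-are-integral} both are integral, so their difference is an integer vector lying in $\ker_{\Z}(A_{TV})$. Remark \ref{rmk:edges-are-graver} then supplies an integer $\lambda$ and a Graver element $g\in\Gfrak_{A_{TV}}$ with $(x_1-x_2,\,a_1-a_2) = \lambda g$. Finally, Theorem \ref{thm:graver-basis} gives $g = \pm(\chi_S,\,\chi_{\delta^+(S)}-\chi_{\delta^-(S)})$ with $S$ inducing a connected subgraph, and reading off the $x$-block yields $x_1 - x_2 = \pm\lambda\chi_S$, as claimed.

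The step I expect to require the most care is matching the normal form assumed by Remark \ref{rmk:edges-are-graver}, namely $\{Az = b,\ z\ge 0\}$, to the actual description of $\Pcal_{A_{TV}}$, which carries the two-sided bounds $0\le x\le Q$ and leaves the edge variables $a$ free. The standard fix is to lift to normal form by adjoining slacks $s_v = Q - x_v\ge 0$ for the upper bounds and splitting each free variable as $a_{uv} = a_{uv}^+ - a_{uv}^-$ with $a^\pm\ge 0$; adjoining identity columns and splitting a column into a pair $c,-c$ both preserve total unimodularity, so the enlarged matrix remains unimodular and the remark applies to it verbatim. One then checks that projecting the resulting edge direction back to the $(x,a)$ coordinates recovers a multiple of an element of $\Gfrak_{A_{TV}}$. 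As a consistency check on the conclusion, note that the projection $(x,a)\mapsto x$ is an affine isomorphism of $\Pcal_{A_{TV}}$ onto the box $[0,Q]^{\Vcal}$ with inverse $x\mapsto(x,Bx)$; edges therefore correspond to box edges, whose direction vectors are $\pm Q\,\chi_{\{v\}}$, and since $(\chi_{\{v\}},\,B\chi_{\{v\}}) = (\chi_{\{v\}},\,\chi_{\delta^+(\{v\})}-\chi_{\delta^-(\{v\})})$ is exactly the singleton Graver element, the corollary holds with $S=\{v\}$.
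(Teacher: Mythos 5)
Your first two paragraphs coincide with the paper's own derivation: Corollary \ref{cor:edges-are-connected} is presented there as an immediate consequence of Remark \ref{rmk:edges-are-graver} together with Theorem \ref{thm:graver-basis}, exactly as you chain them, and you are right that this leaves a hypothesis mismatch to resolve, since Remark \ref{rmk:edges-are-graver} is stated for polyhedra $\{z \ge 0 : Az = b\}$ while $\Pcal_{A_{TV}}$ carries two-sided bounds $0 \le x \le Q$ and free variables $a$. Your proposed repair of that mismatch, however, has a genuine gap. Splitting $a_{uv} = a_{uv}^+ - a_{uv}^-$ does not produce an affinely isomorphic polyhedron: the lift is higher-dimensional and unbounded (it recedes along $(0,e_{uv},e_{uv})$), its Graver basis is essentially $\Gfrak_{A_{TV}'}$ rather than $\Gfrak_{A_{TV}}$ (containing extra elements $\pm(0,e_{uv},e_{uv})$ that project to zero in the $(x,a)$ coordinates), and, decisively, an edge of a projected polyhedron need not be the image of any edge of the source --- project a pyramid over a rhombus along the plane containing the rhombus: the long edge of the triangular shadow is the image of the rhombus facet, not of any edge. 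So applying the remark upstairs and ``projecting back'' does not by itself say anything about the edges of $\Pcal_{A_{TV}}$; the sentence ``one then checks that projecting the resulting edge direction back\ldots'' is precisely the missing argument. The clean repair is different: the support-minimality argument behind Remark \ref{rmk:edges-are-graver} works verbatim with bounds and free variables. An edge direction $d$ of $\Pcal_{A_{TV}}$ lies in $\ker(A_{TV})$ and vanishes on the coordinates held at a bound along that edge, conditions that cut out a one-dimensional space; any kernel vector with support strictly contained in $\mathrm{supp}(d)$ satisfies the same conditions, hence is parallel to $d$, a contradiction. Thus edge directions are circuits of $A_{TV}$, primitive circuits are $\sqsubseteq$-minimal and so lie in $\Gfrak_{A_{TV}}$, and integrality of the vertices (Remark \ref{rmk:verts-are-integral}) makes each edge vector an integral multiple of such an element.

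That said, your closing ``consistency check'' is not merely a check: it is a complete, correct proof, and a genuinely different route from the paper's. Since $a = Bx$ holds on all of $\Pcal_{A_{TV}}$, the polytope is the graph of $x \mapsto Bx$ over the box $[0,Q]^{\Vcal}$, so projection onto $x$ is an affine isomorphism onto the box; box edges join vertices differing in a single coordinate and have vectors $\pm Q e_v$, whence every edge of $\Pcal_{A_{TV}}$ equals $\pm Q\,(\chi_{\{v\}}, \chi_{\delta^+(\{v\})} - \chi_{\delta^-(\{v\})})$, i.e.\ $Q$ times a singleton Graver element. This bypasses both Remark \ref{rmk:edges-are-graver} and Theorem \ref{thm:graver-basis} and proves something sharper than the corollary: for this particular polytope only singleton sets $S$ occur as edge directions (the richer connected-subgraph moves matter for the subproblem polytopes built on $A_{TV}'$, whose $a^{\pm}$ variables are not determined by $x$). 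I would promote that observation to be the proof and drop the lifting argument.
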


In fact, if our objective function $J$ in \eqref{eq:General-Problem} is linear and we run a simplex method on the linear relaxation, from Corollary \ref{cor:edges-are-connected}, this is effectively performing a Graver augmentation algorithm as the simplex method traverses the edges of $\mathcal{P}_{A_{TV}}$. We explore this observation further in Section \ref{sec:trust-region}.

\begin{table}[]
    \centering
    \begin{tabular}{c|c|c|c|c|c|c|c}
    n & 1 & 2 & 3 & 4 & 5 & 6 & 7 \\ \hline
	\# Connected Subgraphs & 1 & 13 & 218 & 11506 & 2301877 & 1.73 $\times 10^9$ & 4.87 $\times 10^{12}$  \end{tabular}
    \caption{Number of connected induced subgraphs of the $n \times n$ grid graph (from OEIS \cite[A059525]{oeis})}
    \label{tab:connected-graph-count}
\end{table}

\subsection{Variant for Total Variation Problem}
Since the total variation is one of the more common forms of $G$ in the objective of $\eqref{eq:General-Problem}$, we explore a specific linearization for this version of the problem.
Using standard techniques, we introduce new variables $a_{uv}^+, a_{uv}^-$ for each edge $uv$ where $|x_u - x_v| = a_{uv}^+ + a_{uv}^-$ and $x_u - x_v = a_{uv}^+ - a_{uv}^-$. This leads to the following integer linear program:

\begin{subequations}
\begin{align}
    \min_{x,a^+,a^-} \qquad &\sum_{v \in \Vcal} F_v(x_v) + \sum_{uv \in \Ecal} a_{uv}^+ + a_{uv}^- \\
    \text{s.t.} \qquad & x_u - x_v = a_{uv}^+ - a_{uv}^-, \quad uv \in \Ecal\\
                & x \in \{0,1,\ldots,Q\}^{\Vcal}, a^+, a^- \in \R_{\ge 0}^{\Ecal}.
\end{align} \label{eq:TV-Linear}
\end{subequations}
The constraint matrix of \eqref{eq:TV-Linear} is only a minor modification of the constraint matrix for \eqref{eq:sub} and is given by concatenating another identity matrix:
\begin{equation}
    A_{\TV}' = \begin{bmatrix}
        A_{TV} & I
    \end{bmatrix} = \begin{bmatrix} B & -I & I\end{bmatrix},\label{eq:tv-constraints}
\end{equation}
where $B$ is as before an oriented incidence matrix for the underlying graph $\Gcal$. Since $A_{TV}$ is totally unimodular and we are producing $A_{TV}'$ by concatenating an identity matrix to $A_{TV}$, it follows that $A_{TV}'$ is totally unimodular.  Therefore Remarks \ref{rmk:verts-are-integral} and \ref{rmk:edges-are-graver} hold for this problem, too.

Unlike in \eqref{eq:sub}, for each $x \in \{0,\ldots, Q\}^{\Vcal}$ there exist infinitely many choices of $a^+$ and $a^-$ that are feasible, since $(x, a^+ + Ce_{uv}, a^- + Ce_{uv})$ is feasible for $C > 0$ and any choice of $uv \in \Ecal$. However, only one choice of $a^+,a^-$ will satisfy $|x_u - x_v| = a_{uv}^+ - a_{uv}^-$, when $a_{uv}^+ = (x_u - x_v)_+$ and $a_{uv}^- = (x_u - x_v)_-$. 

\subsubsection{Graver Basis for Total Variation Variant}

The Graver basis for $A_{TV}'$ is by and large structurally the same as for $A_{TV}$ in that any move $(x, a^+, a^-) \in \Gfrak_{A_{TV}'}$ with $x \ne 0$ satisfies $x = \pm \chi_S$ for $S$ inducing a connected subgraph. The primary difference is that a multiplicity is introduced for each of the aforementioned moves. We describe how this multiplicity manifests itself in the Graver basis.

First we prove a lemma analogous to Lemma \ref{lem:TV-majorizer}.
\begin{lemma} \label{lem:TV-prime-majorizer}
    Suppose that $z = (x, a^+, a^-) \in \text{ker}_{\mathbb{Z}}(A_{TV}')$, where $x \ne 0$ where some $x_v > 0$. Let $c = \max_v x_v$, and let $S$ be a maximally connected component on the vertices where $x_v = c$. Then there exists $g \in \text{ker}_{\mathbb{Z}}(A_{TV}')$ where the projection of $g$ onto the $x$ variables is $\chi_S$ and $g \sqsubseteq z$. An analogous result holds when there exists $x_v < 0$, $c = \min_v x_v$, and $g$ projects to some $-\chi_S$.
\end{lemma}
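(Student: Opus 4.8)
The plan is to mirror the proof of Lemma~\ref{lem:TV-majorizer}, reusing its argument verbatim for the $x$-coordinates and supplying new content only for the split variables $a^+, a^-$. First I would record that $\chi_S \sqsubseteq x$: on $S$ we have $x_v = c \ge 1$, which matches the sign of $(\chi_S)_v = 1$ with $1 \le |x_v|$, while off $S$ the coordinate of $\chi_S$ vanishes. I would then observe that the maximality of $c$ forces, exactly as before, $x_u - x_v \ge 1$ for every $uv \in \delta^+(S)$ and $x_u - x_v \le -1$ for every $uv \in \delta^-(S)$, since an adjacent vertex also attaining $x_v = c$ would lie in the same component $S$.

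Next I would build $g = (\chi_S, b^+, b^-)$ edge by edge. The kernel condition reads $b_{uv}^+ - b_{uv}^- = (\chi_S)_u - (\chi_S)_v =: \epsilon_{uv}$ for each edge, where $\epsilon_{uv} = 1$ on $\delta^+(S)$, $\epsilon_{uv} = -1$ on $\delta^-(S)$, and $\epsilon_{uv} = 0$ on the remaining edges (those internal to $S$ or disjoint from $S$). On the edges with $\epsilon_{uv} = 0$ I set $b_{uv}^+ = b_{uv}^- = 0$, which trivially satisfies both the kernel equation and $\sqsubseteq$ on those two coordinates. Because $\sqsubseteq$ is a coordinatewise relation, it then suffices to choose, for each edge of $\delta(S)$, a lattice pair $(b_{uv}^+, b_{uv}^-)$ with $b_{uv}^+ - b_{uv}^- = \epsilon_{uv}$ that is dominated by $(a_{uv}^+, a_{uv}^-)$ in each coordinate separately.

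The crux is this coordinate selection on $\delta(S)$, and it is also where the multiplicity alluded to in the surrounding text originates. Consider $uv \in \delta^+(S)$, so $\epsilon_{uv} = 1$ and $a_{uv}^+ - a_{uv}^- = x_u - x_v \ge 1$. This inequality rules out the case $a_{uv}^+ \le 0$ together with $a_{uv}^- \ge 0$, so at least one of $a_{uv}^+ \ge 1$ or $a_{uv}^- \le -1$ must hold. In the first case I take $(b_{uv}^+, b_{uv}^-) = (1, 0)$ and in the second $(b_{uv}^+, b_{uv}^-) = (0, -1)$; either choice gives $b_{uv}^+ - b_{uv}^- = 1$ and is dominated coordinatewise by $(a_{uv}^+, a_{uv}^-)$, since the nonzero entry shares its sign and has magnitude $1$ while the zero entry is compatible with any sign. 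The edges $uv \in \delta^-(S)$ are handled symmetrically from $a_{uv}^+ - a_{uv}^- \le -1$, taking $(0,1)$ or $(-1,0)$. Assembling these choices yields $g \in \ker_{\Z}(A_{TV}')$ with $x$-projection $\chi_S$ and $g \sqsubseteq z$, and $g \ne 0$ since $S$ is nonempty; the analogous statement for $c = \min_v x_v$ follows by negating signs throughout. I expect the only genuine obstacle to be verifying that the dichotomy ``$a_{uv}^+ \ge 1$ or $a_{uv}^- \le -1$'' is exhaustive on $\delta^+(S)$, which becomes immediate once the sign constraint $a_{uv}^+ - a_{uv}^- \ge 1$ is in hand; everything else is bookkeeping.
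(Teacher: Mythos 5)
Your proposal is correct and follows essentially the same argument as the paper's proof: take $\chi_S$ on the vertex variables, use the maximality of $c$ and of the component $S$ to get $x_u - x_v = a_{uv}^+ - a_{uv}^- \ge 1$ on $\delta^+(S)$ (and $\le -1$ on $\delta^-(S)$), and then resolve each boundary edge via the same dichotomy ($a_{uv}^+ \ge 1$ or $a_{uv}^- \le -1$), choosing the dominated unit pair accordingly and zero elsewhere. Your write-up is slightly more explicit about why the dichotomy is exhaustive and why $\chi_S \sqsubseteq x$, but the construction is identical.
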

\begin{proof}
    We write $g = (\chi_S, a'^+, a'^-)$. Since $\chi_S \subset x$, we only need to determine $a'^+$ and $a'^-$. Since $c$ is maximal and $S$ is maximally connected, we have that for $uv \in \delta^+(S)$, $x_u - x_v = a_{uv}^+ - a_{uv}^- \ge 1$. This implies that either $a_{uv}^+ \ge 1$ or $a_{uv}^- \le -1$. In the former case we may set $a'^+_{uv} = 1$ and $a'^-_{uv} = 0$, and in the latter case we set $a_{uv}'^+ = 0$ and $a'^-_{uv} = -1$. For edges $uv \in \delta^-(S)$, $x_u - x_v = a_{uv}^+ - a_{uv}^- \le -1$, so that $a_{uv}^+ \le -1$ or $a_{uv}^- \ge 1$. For the first case we take $a_{uv}'^+ = -1$ and $a_{uv}'^- = 0$, and in the second case we set $a_{uv}'^+ = 0$ and $a_{uv}'^- = 1$. For all other edges we take $a'^+_{uv} = a'^-_{uv} = 0$. By construction we have $a'^+ \sqsubseteq a^+$ and $a'^- \sqsubseteq a^-$, and all the constraints are satisfied by these choices,  showing that $g \in \text{ker}_{\mathbb{Z}}({A_{TV}'})$.
\end{proof}

From Lemma \ref{lem:TV-prime-majorizer} it follows that each Graver element either must have $x = 0$ or its nonzero $x$ values constitute a connected component. 
In the former case this imposes the restriction that $a_{uv}^+ = a_{uv}^-$ for all edges $uv$, implying that $\sqsubseteq$-minimal moves of this type are all of the form $\pm(0, e_{uv},e_{uv})$. 
In the latter case the moves take the form $g = \pm(\chi_S, a^+, a^-)$, where $S$ induces a connected subgraph. For each $uv \in \delta^+(S)$, we have $x_u - x_v = a_{uv}^+ - a_{uv}^- = 1$; and for each $uv \in \delta^-(S)$, $x_u - x_v = a_{uv}^+ - a_{uv}^- = -1$. We cannot have both $a_{uv}^+$ and $a_{uv}^-$ nonzero because such a move majorizes some $\pm(0, e_{uv}, e_{uv})$. Thus,  when $uv \in \delta^+(S)$, the only options are $a_{uv}^+ = 1$ and $a_{uv}^- = 0$ or $a_{uv}^+ = 0, a_{uv}^- = -1$; and when $uv \in \delta^-(S)$, we have $a_{uv}^+ = -1$ and $a_{uv}^- = 0$ or $a_{uv}^+ = 0$ and $a_{uv}^- = 1$. Each move of this type is denoted by a choice of which of $a_{uv}^+$ and $a_{uv}^-$ are 0 for each edge $uv$. We can thus characterize all moves of this form  first by a choice of connected subset $S$ and then by a partition of the $\delta(S) = E^+ \cup E^-$, where $E^+$ is the choice of edges where $a_{uv}^+$ is nonzero and $E^-$ is the choice where $a_{uv}^-$ is nonzero, as described above. Given $S, E^+, E^-$, we denote the corresponding Graver move by $z_{S,E^+,E^-}$. 
For each connected $S \subset V$, there are $2^{|S|}$ such moves. 
We sum up this discussion in the following theorem.

\begin{theorem}
    The Graver basis for $A'_{TV}$ comprises two sets of elements, $\Gfrak_{A_{TV}'} = \Gfrak_1 \cup \Gfrak_2$, where $\Gfrak_1 = \{ \pm (0, e_{uv}, e_{uv}) : uv \in \Ecal\}$, and $\Gfrak_2 = \{ \pm z_{S,A,B} : S \subset \Vcal, A \cup B = \delta(S), A \cap B = \emptyset, S \text{ induces a connected subgraph} \}$.
\end{theorem}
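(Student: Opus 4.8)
The plan is to follow the same two-step template used for the proof of Theorem \ref{thm:graver-basis}, taking Lemma \ref{lem:TV-prime-majorizer} as the workhorse. Writing a generic kernel element as $z = (x, a^+, a^-) \in \ker_\Z(A_{TV}')$ (recall that, as a kernel, these coordinates carry no sign restriction), I would first show that every $\sqsubseteq$-minimal element of $\ker_\Z(A_{TV}')\setminus\{0\}$ lies in $\Gfrak_1 \cup \Gfrak_2$, and then separately verify that each listed element is genuinely $\sqsubseteq$-minimal and hence belongs to the basis.

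For the first inclusion I would split on whether $x$ vanishes. If $x = 0$, the kernel equation $x_u - x_v = a^+_{uv} - a^-_{uv}$ collapses to $a^+_{uv} = a^-_{uv}$ on every edge, so $z = (0, a^+, a^+)$; if any coordinate $a^+_{uv}$ is nonzero, then $\sgn(a^+_{uv})\,(0, e_{uv}, e_{uv}) \sqsubseteq z$ lies in the kernel, and minimality forces equality, giving an element of $\Gfrak_1$. If $x \ne 0$, I would assume some $x_v > 0$ (the case with only negative entries being symmetric). Lemma \ref{lem:TV-prime-majorizer} then produces $g \sqsubseteq z$ whose $x$-projection is $\chi_S$ for a connected component $S$ of the argmax set; minimality forces $z = g$, and inspecting the construction in that lemma shows $g$ has exactly one of $a^+_{uv}, a^-_{uv}$ nonzero on each boundary edge and zero on all other edges, which is precisely the description of some $z_{S,A,B} \in \Gfrak_2$.

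For the reverse inclusion, kernel membership of each listed vector is immediate by construction, so the work is $\sqsubseteq$-minimality. The $\Gfrak_1$ case is routine: any nonzero kernel element dominated by $(0, e_{uv}, e_{uv})$ is supported on the two slots $a^\pm_{uv}$ with coefficients in $\{0,1\}$, and the kernel equation forces both to equal $1$. The substantive case is $\Gfrak_2$, where I would take a nonzero kernel element $g = (\chi_T, b^+, b^-) \sqsubseteq z_{S,A,B}$, noting $\chi_T \sqsubseteq \chi_S$ forces $T \subseteq S$. If $\emptyset \ne T \subsetneq S$, connectivity of $S$ yields an edge between $T$ and $S \setminus T$; this edge is interior to $S$, so $z_{S,A,B}$ is zero on both its $a^\pm$ slots, whence $g$ is too, and the kernel equation forces the two endpoints to share the same $x$-value, contradicting that one lies in $T$ and the other does not. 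This is exactly the connectivity argument from the proof of Theorem \ref{thm:graver-basis}. If $T = \emptyset$, then $g = (0, b^+, b^+)$, but on each boundary edge $z_{S,A,B}$ is nonzero in only one of the two $a^\pm$ slots, so domination forces the other slot of $g$ to vanish and hence both, giving $g = 0$, a contradiction. Thus $T = S$, and on each boundary edge the single active slot of $g$ is pinned to match $z_{S,A,B}$ by the kernel equation, so $g = z_{S,A,B}$.

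I expect the main obstacle to be the minimality argument for $\Gfrak_2$, since it must combine two independent reductions: the connectivity argument ruling out proper nonempty $T$ (inherited from Theorem \ref{thm:graver-basis}) and the new bookkeeping introduced by splitting each difference into $a^+$ and $a^-$. The crucial structural fact making the split legitimate is that each $z_{S,A,B}$ activates at most one of $a^+_{uv}, a^-_{uv}$ on every edge, so it can never majorize a vector of the form $(0, e_{uv}, e_{uv})$; this is precisely what prevents the $\Gfrak_1$ and $\Gfrak_2$ elements from collapsing into one another and what disposes of the $T = \emptyset$ subcase.
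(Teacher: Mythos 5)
Your proof is correct and follows essentially the same route as the paper: the paper likewise deduces the theorem from Lemma~\ref{lem:TV-prime-majorizer} by splitting on whether $x$ vanishes, collapsing the $x=0$ case to $a^+_{uv}=a^-_{uv}$ (hence $\pm(0,e_{uv},e_{uv})$), and using the observation that a move with both $a^+_{uv}$ and $a^-_{uv}$ nonzero would majorize some $\pm(0,e_{uv},e_{uv})$ to pin down the boundary-edge structure of the $x\neq 0$ moves. The only difference is thoroughness: the paper's prose establishes just the forward containment (every Graver element has the stated form) and leaves the $\sqsubseteq$-minimality of the listed elements implicit, whereas you verify the reverse inclusion explicitly by adapting the connectivity argument from the proof of Theorem~\ref{thm:graver-basis} and handling the $T=\emptyset$ case via the one-active-slot-per-boundary-edge structure; that extra check is sound and fills a routine gap in the paper's exposition.
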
\label{thm:alt-graver-basis}

\subsection{Augmentation Subproblems}

We now comment on how we may set up the augmentation subproblems in Algorithm \ref{alg:NL-Augmentation}.
In this setting we have a candidate point $\hat{x} \in \{0,1,\ldots,Q\}^{\Vcal}$ and we wish to search  the set of moves $\chi_X$ with $X \subset \Vcal$ for a move to either increment or decrement $\hat{x}$. 
We exploit the total unimodularity of the constraints to pose two linear programs for these subproblems.

For the incrementing problem we introduce a new binary variable $\Delta x_v \in \{0,1\}$ indicating whether we increment the $\hat{x}_v$ at a given vertex $v \in \Vcal$. We also include corresponding variables $\Delta a^+_{uv}, \Delta a^-_{uv} \in \{0, 1\}$, which have an analogous interpretation for incrementing variables $a^+_{uv}$ and $a^-_{uv}$. To maintain feasibility after augmentation, we require $\Delta x_u - \Delta x_v = \Delta a_{uv}^+ - \Delta a_{uv}^-$ for each $uv \in \Ecal$.
We represent the change in objective if $\hat{x}_v$ is incremented by $(F_v(\hat{x}_v + 1) - F_v(\hat{x}_v))\Delta x_v$. We abbreviate this coefficient as $\Delta^+F(\hat{x}_v) := F_v(\hat{x}_v + 1) - F_v(\hat{x})$ and denote $\Delta^-F(\hat{x}_v) := F_v(\hat{x}_v - 1) - F_v(\hat{x}_v)$ analogously.
Unlike $F$, to adapt the change in $G$ into our objective, we need to account for the fact that the argument $x_u - x_v$ may increase or decrease in value by 1.
The fact that we have two variables $a_{uv}^+$ and $a_{uv}^-$ for each edge provides the flexibility to incorporate this information. 
With this intuition, we let the coefficients of $\Delta a^+_{uv}$ and $\Delta a^-_{uv}$ be $\Delta^+G_{uv}(\hat{a}_{uv})$ and $\Delta^- G_{uv}(\hat{a}_{uv})$, respectively where $\hat{a}_{uv} := \hat{x}_u - \hat{x}_v$.

We then have the following formulation of the \textbf{up augmentation subproblem} \eqref{eq:Alt-Up-Augment}:
\begin{subequations}
\begin{align}
    \min_{\Delta x,\Delta a^+,\Delta a^-} \qquad & \sum_{v \in \Vcal} \Delta^+ F_v(\hat{x}_v)\Delta x_v + \sum_{uv \in \Ecal}\Delta^+ G_{uv}(\hat{a}_{uv})\Delta a_{uv}^+ + \Delta^-G_{uv}(\hat{a}_{uv})\Delta a_{uv}^-  \\
    \text{s.t.} \qquad& \Delta x_u - \Delta x_v = \Delta a_{uv}^+ - \Delta a_{uv}^-, \quad uv \in \Ecal \\
    &\Delta x \in \{0,1\}^{\Vcal}, \Delta a^+, \Delta a^- \in \{0,1\}^{\Ecal}.
\end{align} \label{eq:Alt-Up-Augment}%
\end{subequations}
The \textbf{down augmentation subproblem} is presented in \eqref{eq:Alt-Down-Augment}. In this context, if $\Delta x_v=1$, we decrement $x_v$, and therefore its coefficient in the objective is $\Delta^-F_v(\hat{x}_v)$. $\Delta^+a_{uv}$ and $\Delta a^-_{uv}$ maintain the same interpretation, but note that we need to reverse $\Delta a^-_{uv}$ and $\Delta a^+_{uv}$ in \eqref{eq:down-aug-eqn}.
\begin{subequations}
\begin{align}
    \min_{\Delta x,\Delta a^+,\Delta a^-} \qquad & \sum_{v \in \Vcal} \Delta^-F_v(\hat{x})\Delta x_v + \sum_{uv \in \Ecal}\Delta^+G_{uv}(\hat{a}_{uv})\Delta a_{uv}^+ + \Delta^-G_{uv}(\hat{a}_{uv})\Delta a_{uv}^- \\
    \text{s.t.} \qquad& \Delta x_u - \Delta x_v = \Delta a_{uv}^- - \Delta a_{uv}^+, \quad uv \in \Ecal \label{eq:down-aug-eqn}\\
    &\Delta x \in \{0,1\}^{\Vcal}, \Delta a^+, \Delta a^- \in \{0,1\}^{\Ecal}
\end{align} \label{eq:Alt-Down-Augment}%
\end{subequations}
Implicit in this formulation is an assumption of complementarity for variables $\Delta a^+_{uv}$ and $\Delta a^-_{uv}$. 
As is, the formulation allows both $\Delta a_{uv}^+$ and $\Delta a_{uv}^-$ to be nonzero, which is contradictory to our assumptions in the problem since if we increment both $a^+_{uv}$ and $a^-_{uv}$, we expect no change in $x_u - x_v$, but in the formulation we incur the cost $\Delta^+ G_{uv}(\hat{a}_{uv}) + \Delta^- G_{uv}(\hat{a}_{uv})$ in the objective. 
However, we do not need to be concerned about this case because we assume each $G_{uv}$ convex, which then implies
\begin{equation}
    \Delta^+ G_{uv}(\hat{a}_{uv}) + \Delta^- G_{uv}(\hat{a}_{uv}) \ge 0,
\end{equation}
and therefore any solution where both variables are nonzero can be replaced with a solution with complementarity enforced of equal or lower objective.

Each of these subproblems may be solved as linear programs given the total unimodularity of the constraints. This is not the fastest way to solve these problems given that they can be set up as minimum cut problems (as in \cite{kolmogorov2009new}).
However, we explicitly pose them in this way because applying primal simplex methods to these problems now corresponds to Graver augmentation methods owing to Corollary \ref{cor:edges-are-connected}.
We will see in Section \ref{sec:trust-region} how we can use these subproblems to produce randomized methods for sampling Graver augmentations even for the fully constrained problem.

\section{Randomized Heuristics for Constrained Problem}\label{sec:trust-region}

Now we consider problem \eqref{eq:General-Problem} in full. 
Our approach to the constrained problem largely follows that of the unconstrained problem in that we set up the two associated augmentation subproblems \eqref{eq:Alt-Up-Augment} and \eqref{eq:Alt-Down-Augment}, and we repeatedly search the Graver basis $\Gfrak_{A_{TV}}$ for improving moves until we cannot produce any more.
Unfortunately, with the additional constraint, the Graver basis no longer functions as an optimality certificate, and this approach will not guarantee a globally optimal solution.
Given the enormity of the Graver basis, however, we hope this will provide a reasonable heuristic for producing high-quality solutions.

In this section we first assess notions of local optimality when using a Graver basis. Then we demonstrate a method for sampling improving feasible moves from this set via a variant of the simplex method. We compose this sampling method with initialization techniques to produce a randomized augmentation algorithm for \eqref{eq:General-Problem}.

\subsection{Local Graver Optimality}

We say that a feasible point $x$ for \eqref{eq:General-Problem} is $\Gfrak_{A_{TV}}$\textbf{-optimal} if for all $g = (g_x, g_a) \in \Gfrak_{A_{TV}}$, we have that either $J(x + g_x) \ge J(x)$ or $x + g_x$ is infeasible. Note that in the absence of the budget constraint, a $\Gfrak_{A_{TV}}$-optimal point is also globally optimal.

We compare this notion of local optimality with other common forms. We say a point $x$ is \textbf{$k$-optimal} if for all other feasible $x'$ with $\|x - x'\|_1 \le k$, we have $J(x) \le J(x')$. Since the Graver basis projected on its $x$ variables includes all moves of the form $\pm e_v$, we have the following proposition.
\begin{prop}
    If $x$ is $\Gfrak_{A_{TV}}$-optimal for \eqref{eq:General-Problem}, then it is also 1-optimal.
\end{prop}
However, a point can be $\Gfrak_{A_{TV}}$-optimal and not 2-optimal since the ball of radius 2 about $x$ includes points that involve both incrementing and decrementing to reach and all Graver moves solely increment or decrement. $\Gfrak_{A_{TV}}$-optimality may appear fairly weak in that it does not guarantee optimality over a relatively small neighborhood of a feasible point.
Nevertheless, using $\Gfrak_{A_{TV}}$ offers advantages in navigating the integer lattice.
In particular, we can guarantee there is a sequence of improving Graver moves to the global optimum given that we start at the correct point as described by Proposition \ref{prop:minimal-budget}. In addition, we demonstrate empirically in Section \ref{sec:experiments} that we can often achieve an objective close to optimal when using the Graver basis.

\begin{prop}\label{prop:minimal-budget}
    Let $x^{(0)}$ be given such that $x^{(0)}_v$ minimize $H_v$ over $\{0,1,\ldots,Q\}$
    There exists a sequence of Graver elements $g^{(0)},\ldots,g^{(N-1)}$ and iterates $x^{(k+1)} = x^{(k)} + \alpha_kg^{(k)}$, $\alpha_k \in \Z_{> 0}$ for $k=0,\ldots,N-1$ such that $\{J(x^{(k)})\}_k$ is a decreasing sequence, $H(x^{(k)}) \le \Delta$ for all $k$ and $x^{(N)}$ is the globally optimal solution of \eqref{eq:General-Problem}.
\end{prop}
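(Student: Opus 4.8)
The plan is to realize the claimed trajectory as a conformal Graver walk from $x^{(0)}$ toward a global optimum, and to show that the special choice of $x^{(0)}$ forces every intermediate iterate to respect the budget. First I would fix a global optimizer $x^*$ of \eqref{eq:General-Problem} together with its induced edge variables $a^*_{uv} = x^*_u - x^*_v$, and write $z^* = (x^*,a^*)$ and $z^{(0)} = (x^{(0)},a^{(0)})$ for the corresponding lattice points of $\ker_{\Z}(A_{TV})$; such an optimizer exists because the feasible set is finite and nonempty, since $H(x^{(0)}) = \sum_{v}\min_{w\in\{0,\ldots,Q\}}H_v(w) \le H(x^*) \le \Delta$, so $x^{(0)}$ is itself feasible. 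The engine of the argument is the positive-sum (conformal decomposition) property of the Graver basis \cite{de2012algebraic}: every $d \in \ker_{\Z}(A_{TV})$ admits a decomposition $d = \sum_i \alpha_i g_i$ with $\alpha_i \in \Z_{>0}$, $g_i \in \Gfrak_{A_{TV}}$, and $\alpha_i g_i \sqsubseteq d$ for all $i$. By Theorem \ref{thm:graver-basis} each $g_i$ projects in the $x$-coordinates to $\pm\chi_{S_i}$ for a connected $S_i$.

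I would then run an induction maintaining the invariant $z^{(k)} - z^{(0)} \sqsubseteq z^* - z^{(0)}$, starting at $z^{(0)}$. If $z^{(k)} = z^*$ we stop; otherwise set $d^{(k)} = z^* - z^{(k)} \in \ker_{\Z}(A_{TV})\setminus\{0\}$ and take a conformal decomposition $d^{(k)} = \sum_i \alpha_i g_i$ as above, so that each piece satisfies $\alpha_i g_i \sqsubseteq z^* - z^{(0)}$. Because the lifted objective $\tilde J(x,a) = \sum_v F_v(x_v) + \sum_{uv} G_{uv}(a_{uv})$ is separable and convex, the standard superadditivity inequality over conformal sums gives $\tilde J(z^{(k)} + d^{(k)}) - \tilde J(z^{(k)}) \ge \sum_i\bigl(\tilde J(z^{(k)} + \alpha_i g_i) - \tilde J(z^{(k)})\bigr)$; as the left-hand side equals $J(x^*) - J(x^{(k)}) \le 0$, some component $g^{(k)} := g_i$ with step $\alpha_k := \alpha_i$ does not increase the objective, and strictly decreases it whenever $x^{(k)}$ is suboptimal. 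Setting $x^{(k+1)} = x^{(k)} + \alpha_k \chi_{S_i}$ preserves the invariant and strictly reduces $\|z^* - z^{(k)}\|_1$, so the walk terminates at $x^*$ after finitely many steps with $J$ nonincreasing along the way.

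It remains to verify budget feasibility, which is the crux and the place where the hypothesis on $x^{(0)}$ is used. The invariant $x^{(k)} - x^{(0)} \sqsubseteq x^* - x^{(0)}$ says that each coordinate $x^{(k)}_v$ lies between $x^{(0)}_v$ and $x^*_v$. Since $H_v$ is convex and attains its minimum over $\{0,\ldots,Q\}$ at $x^{(0)}_v$, it is monotone on the interval with endpoints $x^{(0)}_v$ and $x^*_v$, whence $H_v(x^{(k)}_v) \le H_v(x^*_v)$; summing over $v$ yields $H(x^{(k)}) \le H(x^*) \le \Delta$. The same betweenness shows $x^{(k)} \in \{0,\ldots,Q\}^{\Vcal}$, so every iterate is feasible. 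The main obstacle is exactly this feasibility step: one must recognize that insisting on moves conformal to $x^* - x^{(0)}$ (which the positive-sum property delivers for free) combined with the coordinatewise minimality of $x^{(0)}$ forces $H$ to increase monotonically yet never exceed $H(x^*)$. A minor point to settle is the meaning of \emph{decreasing} when \eqref{eq:General-Problem} has several optima: the construction yields a nonincreasing sequence reaching the chosen $x^*$, with strict decrease at every step if $x^*$ is the unique optimum, and otherwise one simply halts at the first optimum encountered.
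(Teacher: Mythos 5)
Your proof is correct and takes essentially the same route as the paper's: both fix an optimum $x^*$, conformally decompose $x^* - x^{(0)}$ into Graver elements, and obtain budget feasibility of every iterate from the convexity of each $H_v$ together with its minimality at $x^{(0)}_v$ (betweenness of coordinates forcing $H_v(x^{(k)}_v) \le H_v(x^*_v)$). The only differences are cosmetic: where the paper cites \cite{de2012algebraic} for an ordering of a single fixed decomposition that makes the objective nonincreasing, you re-derive that fact step-by-step via the superadditivity inequality for separable convex functions over conformal sums, and you work explicitly in the lifted $(x,a)$ space, which is slightly more careful than the paper's $x$-space shorthand.
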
 
\begin{proof}
    Let $x^*$ be an optimal solution of \eqref{eq:General-Problem}. 
    We decompose $x^* - x^{(0)} = \sum_{k=1}^{N}\alpha_k g^{(k)}$ into Graver moves where $\alpha_k \ge 0$ and $g^{(k)} \sqsubseteq x^* - x^{(0)}$. 
    From \cite{de2012algebraic} we can order the Graver basis elements such that the sequence of iterates $x^{(i)} = x^{(0)} + \sum_{k=1}^i\alpha_k g^{(k)}$ satisfies $J(x^{(i)}) \le J(x^{(i-1)})$ for $i=1,\ldots,N$. Hence, we only need to verify that all $x^{(i)}$ are feasible. Since each $H_v$ is convex and $x^{(0)}_v$ is a minimizer for $H_v$, we have that $H_v$ is decreasing on $[0, x^{(0)}_v]$ and increasing on $[x^{(0)}_v, Q]$. As each $g^{(k)} \sqsubseteq x^* - x^{(0)}$, it necessarily follows that for $v \in \Vcal$, $g^{(1)}_v, g^{(2)}_v, \ldots, g^{(N)}_v$ are all nonnegative or all nonpositive. Hence, the sequence $\{|x^{(i)}_v - x^{(0)}_v|\}_{i}$ is increasing for each $v \in \Vcal$, and therefore $\{H_v(x^{(i)}_v)\}_i$ is also increasing. We have for $i=0,1,\ldots,N$,
    \begin{equation}
        H(x^{(i)}) = \sum_{v \in \Vcal}H_v(x^{(i)}_v) \le \sum_{v\in \Vcal}H_v(x^{(N)}_v) = H(x^{(N)}) = H(x^*) \le \Delta ,
    \end{equation}
    where the final inequality follows from the fact that $x^*$ is feasible for \eqref{eq:General-Problem}. Therefore, we have produced a sequence of feasible iterates decreasing in objective and achieving the global minimum.
\end{proof}



\subsection{Sampling Improving Graver Moves}
From Corollary \ref{cor:edges-are-connected}, all edges of the underlying polytope $\mathcal{P}_{{A}_{TV}}$ in problems \eqref{eq:Alt-Up-Augment} and \eqref{eq:Alt-Down-Augment} correspond to integer multiples of Graver moves. Since applying the simplex method to a linear program corresponds to traversing the edges of this polytope, the simplex method in this context is a specific instantiation of a Graver augmentation algorithm. 
We can take this insight a step further to motivate a sampling algorithm for improving Graver moves. The exponential size of the Graver basis relative to the number of constraints in the description of $\mathcal{P}_{A_{TV}}$ suggests that $\mathcal{P}_{A_{TV}}$ is extremely degenerate. This suggests that there are several basic solutions for each degenerate vertex and that pivots often involve an arbitrary choice from several valid exiting basic variables. We can randomize the algorithm with a random selection of an initial basic solution and randomize the choice of leaving variables. In this fashion, we will produce random improving Graver moves.

From our prior work in \cite{yang2025specialized}, we can associate basic solutions with rooted spanning forests. The association goes as follows: We construct a graph $G_{NB} = (V, E_{NB})$, where $E_{NB}$ comprises all edges $uv$ where both $a_{uv}^+$ and $a_{uv}^-$ are nonbasic. In Proposition \ref{prop:tv-linear-basic} it is shown that this graph is a forest, and furthermore there is only one nonbasic vertex per tree, meaning they can be interpreted as roots. Moreover, each non-tree edge $uv$ has an orientation dictating which of $a_{uv}^+$ and $a_{uv}^-$ is basic; and if these orientations are aligned with the value of $x$ (i.e., if $x_u > x_v$, then the arrow points from $u$ to $v$), the solution is feasible. Some visualizations of the rooted spanning forest representation of basic solutions are presented in Figure \ref{fig:rooted-spanning-forests}. Proposition \ref{prop:tv-linear-basic} is a simpler version of a proposition in \cite{yang2025specialized}. We include a slightly different proof of this fact for completeness.

\begin{prop} \label{prop:tv-linear-basic}
    Let $x$ be a basic solution of \eqref{eq:TV-Linear}. Let $\Vcal_{NB} \subset \Vcal$ denote the vertices for which $x_v$ are nonbasic, and let $\Ecal^+, \Ecal^- \subset \Ecal$ denote the edges $uv$ for which $a_{uv}^+$ and $a_{uv}^-$ are nonbasic, respectively. Let $\Gcal_{NB} = (\Vcal, \Ecal^+ \cap \Ecal^-)$ be the subgraph given by taking only edges for which both $a_{uv}^+$ and $a_{uv}^-$ are nonbasic, and let $\Vcal = \Vcal_1 \cup\dots \cup \Vcal_k$ be a partition of the vertices into connected components. Then the following statements are true:
    \begin{enumerate}
        \item $|\Vcal_{NB}| + |\Ecal^+| + |\Ecal^-| = |\Vcal| + |\Ecal|$.
        \item $\Ecal^+ \cup \Ecal^- = \Ecal$.
        \item The subgraph $\Gcal_{NB} = (\Vcal, \Ecal^+ \cap \Ecal^-)$ is a spanning forest. 
        \item Each tree in $\Gcal_{NB}$ has exactly one vertex $v$ for which $x_v$ is nonbasic.
    \end{enumerate}
\end{prop}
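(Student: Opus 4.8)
The plan is to work directly from the definition of a basic solution of the relaxation of \eqref{eq:TV-Linear} as a choice of basic columns of $A_{TV}' = [\,B\ \ {-I}\ \ I\,]$ forming a nonsingular basis matrix $M$. Since the $-I$ block alone spans the row space, $A_{TV}'$ has full row rank $|\Ecal|$, so every basis has exactly $|\Ecal|$ basic variables. Statement~1 is then bookkeeping: there are $|\Vcal| + 2|\Ecal|$ variables in all, of which $|\Ecal|$ are basic, so the number of nonbasic variables, which is precisely $|\Vcal_{NB}| + |\Ecal^+| + |\Ecal^-|$, equals $|\Vcal| + |\Ecal|$. For Statement~2 I would note that the columns attached to $a_{uv}^+$ and $a_{uv}^-$ are $-e_{uv}$ and $+e_{uv}$, negatives of each other and hence linearly dependent, so they cannot both sit in the nonsingular $M$; thus at least one is nonbasic for every edge, giving $\Ecal^+ \cup \Ecal^- = \Ecal$. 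Combining this with Statement~1 by inclusion--exclusion ($|\Ecal^+| + |\Ecal^-| = |\Ecal| + |\Ecal^+ \cap \Ecal^-|$) yields the key identity $|\Ecal^+ \cap \Ecal^-| = |\Vcal| - |\Vcal_{NB}|$: the number of edges of $\Gcal_{NB}$ equals the number of basic $x$-variables.

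For Statement~3 I would characterize the left null space of $M$. Writing $y^\top M = 0$, each basic $a$-column forces $y_e = 0$ on every edge carrying a basic $a$-variable, so $y$ is supported on $\Ecal^+ \cap \Ecal^-$, while each basic vertex column $B_w$ forces $(B^\top y)_w = 0$ at every basic vertex $w$. A cycle in $\Gcal_{NB}$ would then supply a nonzero circulation $y$ supported on $\Ecal^+ \cap \Ecal^-$ with $B^\top y = 0$ at every vertex, contradicting the nonsingularity of $M$. Hence $\Gcal_{NB}$ is acyclic; since it spans $\Vcal$ by construction, it is a spanning forest.

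For Statement~4 I would first count: a spanning forest on $\Vcal$ with $k$ components has $|\Vcal| - k$ edges, which by the identity above equals $|\Vcal| - |\Vcal_{NB}|$, so $k = |\Vcal_{NB}|$; there are exactly as many trees as nonbasic $x$-vertices. It then suffices to show each tree contains at least one nonbasic $x$-vertex, since at least one per tree across $k$ trees summing to $k$ forces exactly one. Suppose a tree $T_i$ had all its $x$-variables basic. I would exhibit a nonzero right null vector of $M$ from the assignment $\lambda \equiv 1$ on $T_i$ and $0$ elsewhere: on every $\Gcal_{NB}$-edge both endpoints agree (each such edge lies entirely inside or entirely outside the component $T_i$), so $(B\lambda)_e = 0$ there, while on every edge carrying a basic $a$-variable the corresponding row can be zeroed by choosing that variable's coefficient. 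This contradicts nonsingularity of $M$, so each tree has a nonbasic vertex.

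I expect the delicate part to be Statements~3 and~4 — specifically, translating the purely combinatorial conditions (a cycle, or a tree with no nonbasic vertex) into genuine null vectors of the \emph{full} basis matrix. The care required is in verifying that the circulation and the constant-$\lambda$ assignment, initially defined only on the $\Gcal_{NB}$-edges and the basic vertices, extend consistently across the boundary edges carrying basic $a$-variables, so that they truly certify singularity of $M$.
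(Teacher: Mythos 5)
Your proposal is correct, and it reaches the four statements by a genuinely different route than the paper, essentially reversing the order of Statements~3 and~4. The paper proves Statement~4 first: it observes that $x$ is constant on each connected component of $\Gcal_{NB}$, so a component with no nonbasic vertex leaves that constant value undetermined (several solutions for fixed nonbasic values, hence a singular basis), while two nonbasic vertices per component would make the system inconsistent; Statement~3 then follows purely by counting, since a cycle would force $|\Ecal^+ \cap \Ecal^-| > |\Vcal| - k$ and hence $|\Vcal_{NB}| < k$, too few nonbasic $x_v$ to supply every component with the root it was just shown to need. You instead prove acyclicity directly, by turning a cycle of $\Gcal_{NB}$ into a circulation that is a left null vector of the basis matrix $M$ (valid because cycle edges carry no basic $a$-columns and a circulation conserves flow at every vertex, basic or not), and then recover Statement~4 from the counting identity $k = |\Vcal_{NB}|$ plus a right-null-vector certificate: the uniform shift $\lambda \equiv 1$ on a rootless tree, extended across boundary edges using their unique basic $a$-variable, whose column $\mp e_{uv}$ touches only that one row. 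Your two certificates and the paper's non-uniqueness/inconsistency arguments are two sides of the same coin --- your right null vector is precisely the difference of the paper's "multiple solutions" --- but your circulation argument for Statement~3 is a different mechanism from the paper's counting argument, and it makes Statement~3 independent of Statement~4, at the cost of needing two separate singularity constructions where the paper needs one. Your column-dependence proof of Statement~2 (the columns of $a_{uv}^+$ and $a_{uv}^-$ are negatives of each other, so both cannot be basic) is also a cleaner phrasing of the paper's observation that adding $C$ to both variables preserves feasibility; the two arguments are equivalent.
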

\begin{proof}
    Statement 1 is immediate from the fact that there are $|\Vcal| + 2|\Ecal|$ variables and $|\Ecal|$ equality constraints; hence any basic solution has $|\Ecal|$ basic variables and $|\Vcal| + |\Ecal|$ nonbasic variables. Statement 2 follows from the observation that the variables $a_{uv}^+$ and $a_{uv}^-$  appear only in one constraint $x_u - x_v = a_{uv}^+ - a_{uv}^-$. For any choice of these variables, $\bar{a}^{+}_{uv} = a_{uv}^+ + C, \bar{a}^-_{uv} = a_{uv}^- + C$ also satisfy the constraints indicating the system is singular. Hence, at least one of these variables must be nonbasic and equal 0.

For Statements 3 and 4 we observe that if both $a_{uv}^+$ and $a_{uv}^-$ are nonbasic, then $x_u = x_v$. Therefore, on connected components  of $\Gcal_{NB}$, $x$ is constant. For each edge $uv$ between components, the basic variable from $a_{uv}^+$ and $a_{uv}^-$ will take the value of the (appropriately signed) difference of the $x$ values of each component. All that remains to determine the value of all the variables is to establish the value of each component. If in each component there is one vertex $v$ for which $x_v$ is nonbasic, the value of $x_v$ determines the value of the component; but if there is no such variable, each choice of value gives a solution satisfying the constraint equations. Hence, for each component there must be one vertex $v$ for which $x_v$ is nonbasic. There cannot be more than one such vertex because if they took on distinct values, the linear system would be inconsistent and hence such a choice of nonbasic variables would not produce a nonsingular matrix.

To see why each component must be a tree, we proceed with a counting argument. Since for each edge $uv$ one of $a_{uv}^+$ and $a_{uv}^-$ is nonbasic, this covers $|\Ecal|$ variables, and the remaining $|\Vcal|$ nonbasic variables come from picking vertices $v$ where $x_v$ is nonbasic and edges where both $a_{uv}^+$ and $a_{uv}^-$ are nonbasic. If each component of $\Gcal_{NB}$ is a tree, each component has $|\Vcal_i| - 1$ edges, and altogether there are $\sum_{i=1}^k (|\Vcal_i| - 1) = |\Vcal| - k$, which leaves $k$ nonbasic variables that can be selected one from each component. If one component contains a cycle, there is necessarily more than $|\Vcal| - k$ edges chosen to be nonbasic, which does not leave enough nonbasic $x_v$ to cover all $k$ components.
\end{proof}

\begin{figure}
    \centering
    \includegraphics[width=\linewidth]{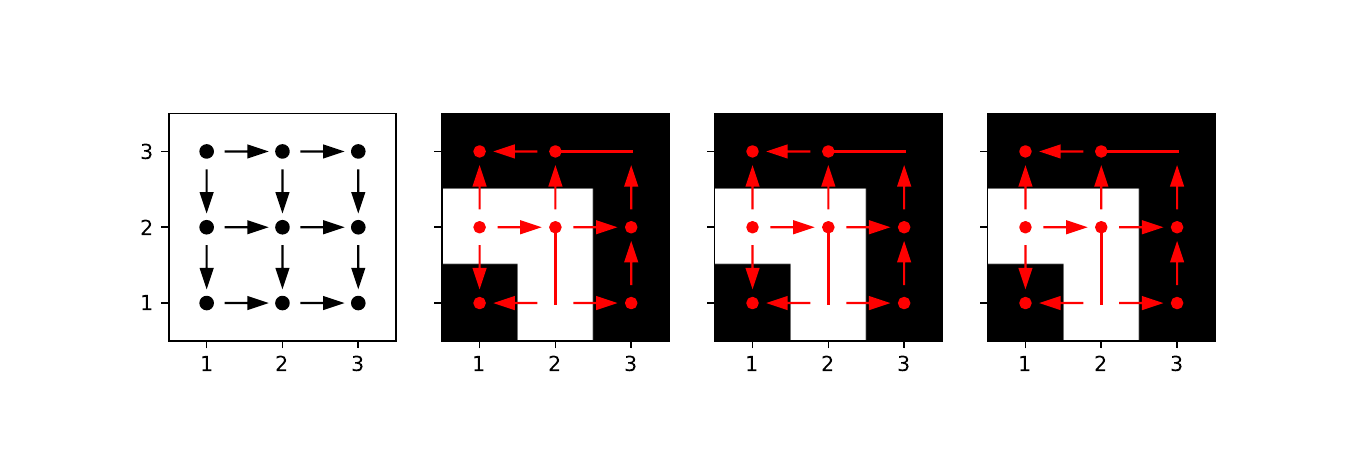}
    \caption{Rooted spanning forest representation of three consecutive basic solutions in the simplex algorithm on a $3\times 3$ grid graph (shown on the left). Vertices $v$ with a red circle indicate nonbasic $x_v$; all other $x_v$ are basic. Red undirected edges $uv$ indicate that both $a_{uv}^+$ and $a_{uv}^-$ are nonbasic. If the direction of an arrow is aligned with the underlying graph, $a_{uv}^+$ is basic and $a_{uv}^-$ is nonbasic, and the reverse is true for the opposite orientation. White cells indicate the variable $x_v = Q$, and black cells indicate $x_v = 0$.}
    \label{fig:rooted-spanning-forests}
\end{figure}

Proposition \ref{prop:tv-linear-basic} now gives an easy way of randomizing the initialization of the simplex method. Assuming we start with the solution $\Delta x_v = \Delta a_{uv}^+ = \Delta a_{uv}^- =  0$ for \eqref{eq:Alt-Up-Augment} or \eqref{eq:Alt-Down-Augment}, sampling any rooted spanning forest of $\Gcal$ alongside a random choice as to which of $\Delta a_{uv}^+$ and $\Delta a_{uv}^-$ is basic gives a valid basic feasible solution. In practice, we choose the spanning forest where each vertex is its own tree, and we uniformly at random pick which of $\Delta a_{uv}^+$ and $\Delta a_{uv}^-$ is basic for each edge $uv$.

We further randomize the algorithm for any pivot by selecting from the valid exiting basic variables uniformly at random. Given the extreme degeneracy noted in \cite{yang2025specialized}, there are typically several. From the rooted spanning forest representation of basic solutions and following \cite{yang2025specialized}, we may interpret these pivots as merging, splitting, and shifting the value on trees in the forest in a randomized fashion. Edge variables entering and leaving the basis can be understood as splitting and merging trees along that given edge, and vertex variables leaving and entering the basis indicate marking and unmarking that vertex as the root of its tree.

The rooted spanning forest representation also gives an easy indication of how much a given pivot is predicted to change the budget constraint. 
Bringing a nonbasic $x_v$ into the basis corresponds to attempting to shift the value of the entire tree $T_v$, where $v$ is the root up or down by 1. 
For $uv$ directed away from the root of the tree, introducing $a_{uv}^+$ to the basis is done by attempting to shift the subtree of $v$ and its descendants down by 1, therefore increasing $a_{uv}^+ = x_u - x_v$. 
Likewise, introducing $a_{uv}^-$ can be done by shifting the subtree $T_v$ up by 1. 
These predicted changes are recorded in \eqref{eq:delta-H}, where we define $\Delta H_z$ as the predicted change in budget for introducing a given nonbasic variable to the basis. For edges $uv$ directed toward the root, the reverse is true.

\begin{equation} \label{eq:delta-H}
    \Delta H_z = \begin{cases}
        \sum_{w \in T_v} H(x_w + 1) - H(x_w) & z = a_{uv}^-, \text{ or } z = x_v, x_v = 0\\
        \sum_{w \in T_v} H(x_w - 1) - H(x_w) & z = a_{uv}^+, \text{ or } z = x_v, x_v = 1
    \end{cases}
\end{equation}
By computing $\Delta H_z$ for each variable, we can reject any pivot that is predicted to leave the feasible region.
Putting these parts together, we can produce a sampling algorithm that produces only improving moves within the feasible region, as is done in Algorithm \ref{alg:Randomized-TV-Algo}.

\SetKwFunction{SampleGraverBasis}{SampleGraverBasis}
\begin{algorithm}
\caption{Graver Basis Sampling Algorithm for \eqref{eq:Alt-Up-Augment} and \eqref{eq:Alt-Down-Augment}}\label{alg:Randomized-TV-Algo}
\DontPrintSemicolon
\Fn{\SampleGraverBasis{$x$, $F$, $G$, $H$, $p$, direction}}{
\leIf {direction = up}{Randomly initialize \eqref{eq:Alt-Up-Augment}}{Randomly initialize \eqref{eq:Alt-Down-Augment}}
\While{$\exists$nonbasic variable $z$ with reduced cost $r_z < 0$ and $H(x) + \Delta H_z \le \Delta$}
{
    Select entering variable $z$ satisfying $H(x) + \Delta H_z \le \Delta$ minimizing $r_z / w(\Delta H_z,p)$\;
    Choose exiting variable from available candidates uniformly at random\;
    \If(\Comment*[f]{This is a sampled move}){Pivot is nondegenerate} 
    { Let $g$ be corresponding Graver move\;
        \Return $g$\;
    }
    Perform pivot\;
}
}
\end{algorithm}

We may also bias the sampling by selecting variables other than those with the most negative reduced cost. One natural adjustment is to weight pivots by how much they impact budget usage. A method that we  use  in Section \ref{sec:experiments} is to weight the reduced cost for variable $z$ by $\Delta H_z$, as in the following expression:
\begin{equation} \label{eq:weighting-scheme}
    w(\Delta H_z,p) = \begin{cases}
        1 & \Delta H_z \le 0 \\
        (1+\Delta H_z)^p & \Delta H_z > 0.
    \end{cases}
\end{equation}
When $p = 0$, this corresponds to the standard method of choosing the variable with most negative reduced cost. As $p$ increases, however, there is a greater bias toward pivots that use the budget efficiently.

The algorithm stops once all possible pivots are predicted to either increase objective value or leave the feasible region. In the absence of the constraint $H(x) \le \Delta$, this is equivalent to optimality; but as is, this gives an interesting version of local optimality that is not Graver local optimality but asserts that there exists a rooted spanning forest where shifting the values on any subtree of the forest will either increase the objective or leave the feasibility region.

Like other simplex methods, we do not have any proof that this algorithm will terminate in polynomial time. In Section \ref{sec:experiments}, however, we will show empirically that this method often produced high-quality solutions and is usually significantly faster than standard integer program solvers. In particular, an interesting relationship exists between the choice of $p$ and the number of pivots; we explore this in Section \ref{sec:p-relation}.

\subsection{Randomized Heuristic for the General Problem}

The ability to sample the Graver basis now provides the foundation for an algorithm for producing approximate solutions to \eqref{eq:General-Problem}. 
To this end, up and down augmentation problems \eqref{eq:Alt-Up-Augment} and \eqref{eq:Alt-Down-Augment} again form a basis for our optimization routine.
Sampling from the Graver bases according to these subproblems leads to a heuristic for solving the problem via randomized augmentation.
In Algorithm \ref{alg:Randomized-Augmentation} we start with an initial feasible point $x^{(0)}$, the choice of which we discuss later in this section. 
Then we repeatedly sample from the set of improving incrementing and decrementing moves using Algorithm \ref{alg:Randomized-TV-Algo} until the simplex method reports no improving moves for each problem. 
It is possible for the sampling algorithm to fail to find an improving move, in which case it returns $\Delta x = 0$.
We note that the alternating scheme for sampling moves is arbitrarily chosen. It may be better to perform several incrementing moves before a decrementing move for certain problems, but we defer this study for future work.

We finish this algorithm with a polishing procedure to ensure that the given solutions are 2-optimal. 
This involves iterating over all pairs of vertices to see whether the solution may be improved by adding moves of the form $a_i e_i + a_j e_j$ for $a_i, a_j \in \{-1, 1\}$. Once no more moves of this form can be applied, the algorithm terminates.

\SetKwFunction{PolishTwoOpt}{Polish2Opt}
\SetKwFunction{RandomizedAugmentation}{RandomizedAugmentation}
\begin{algorithm}
\caption{Randomized Augmentation Algorithm for \eqref{eq:General-Problem}}\label{alg:Randomized-Augmentation}
\Fn{\RandomizedAugmentation{$x$, $F$, $G$, $H$, $p$}}{
\KwData{$x^{(0)} \in \{0,1,\ldots,Q\}^{\Vcal}$, $F, G, H$ $p \ge 0$}
$k \gets 0$\;
\While{True}
{
    $\Delta x^{\text{up}} \gets $ \SampleGraverBasis{$x^{(k)}$, $F$, $G$, $H$, $p$, up}\;
    $x^{(k+1)} \gets x^{(k)} + \Delta x^{\text{up}}$\;
    $\Delta x^{\text{down}} \gets $ \SampleGraverBasis{$x^{(k+1)}$, $F$, $G$, $H$, $p$, down}\;
    $x^{(k+2)} \gets x^{(k+1)} + \Delta x^{\text{down}}$\;
    \If(\Comment*[f]{Neither Augmentation changed $x^{(k)}$}){ $x^{(k+2)} = x^{(k)}$}
    {
        
        Exit loop\; 
    }
    $k \gets k + 2$\;
}
$\tilde{x}^{(k)} \gets \PolishTwoOpt(x^{(k)})$\;
\Return $\tilde{x}^{(k)}$\;
}
\end{algorithm}
\subsubsection{Picking an Initial Point}\label{sec:starting-point}
We elaborate on choices of initial point $x^{(0)}$ in Algorithm \ref{alg:Randomized-Augmentation}. Since the success of the algorithm relies on repeated sampling of ``good'' moves, an initial point that is closer to the optimal solution and thus requires fewer random guesses can significantly improve performance.

\paragraph{Minimal Budget Point.}
A natural starting point $x^{(0)}$ is one that minimizes use of budget, that is, $x^{(0)} = \min_{x \in \{0,1,\ldots,Q\}^{\Vcal}} H(x)$. Since $H$ is separable and convex, this point can be found efficiently by simple bisection algorithms in each dimension. In view of Proposition \ref{prop:minimal-budget}, this choice also has the advantage of guaranteeing the algorithm has a positive probability (however small) of finding the optimal solution.

\paragraph{Penalty-Based Approach.}
Our next choice of initialization relaxes the budget constraint and introduces it as a penalty term with weight $\mu > 0$ as follows:
\begin{subequations}
\begin{align}
    \min_{x} \qquad& \sum_{v\in \Vcal}F_v(x_v) + \sum_{uv\in \Ecal}G_{uv}(x_u - x_v) + \mu \left(\sum_{v \in \Vcal} H_v(x_v) - \Delta\right) \\
    \text{s.t.} \qquad& x \in \{0,1,\ldots,Q\}^{\Vcal} .
\end{align}\label{eq:penalty-formulation}%
\end{subequations}
This may be solved efficiently by using Algorithm \ref{alg:graver-augmentation} since it is of the form \eqref{eq:General-Problem}. 
When $G$ is the total variation, $F$ linear, and $H$ of the form $H(x) = \|x-x^*\|_1$, the authors in \cite{manns2024discrete} give a conditional $p$-approximation result that asserts that if the solution $\bar{x}$ to \eqref{eq:penalty-formulation} is feasible and uses up a proportion $p$ of the budget $\Delta$, it follows that $J(\bar{x}) - J(\hat{x}) \le p(J(x^*) - J(\hat{x}))$. Since their proof makes no use of the specific form of $F$, $G$, or $H$,  we can generalize their result.
\begin{prop}\label{prop:penalty-form}
    Let $x^{(0)} \in \{0,1,\ldots,Q\}^{\Vcal}$ minimize $H$ where $H(x^{(0)}) = 0$. Let $\bar{x}$ be the optimum of \eqref{eq:penalty-formulation}, and let $x^*$ be the optimum of \eqref{eq:General-Problem}. If we have that $p\Delta \le H(\bar{x})\le \Delta$, it follows that $J(\bar{x}) - J(x^{(0)}) \le p(J(x^*) - J(x^{(0)}))$.
\end{prop}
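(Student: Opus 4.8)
The plan is to exploit the optimality of $\bar{x}$ for the penalty objective by comparing its value against the two reference points $x^{(0)}$ and $x^*$, and then to eliminate the penalty weight $\mu$ between the two resulting inequalities. Write $L(x) = J(x) + \mu H(x)$, and note that the additive constant $-\mu\Delta$ in \eqref{eq:penalty-formulation} does not affect the minimizer, so $\bar{x}$ minimizes $L$ over the box $\{0,\ldots,Q\}^{\Vcal}$. I would use the shorthand $A := J(\bar{x}) - J(x^{(0)})$ and $B := J(x^*) - J(x^{(0)})$, so that the claim becomes $A \le pB$. I may assume $\Delta > 0$ and $p > 0$, since for $p = 0$ the claim reduces to $A \le 0$.

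First I would compare $\bar{x}$ against $x^{(0)}$. Since $L(\bar{x}) \le L(x^{(0)})$ and $H(x^{(0)}) = 0$, this yields $J(\bar{x}) + \mu H(\bar{x}) \le J(x^{(0)})$, i.e. $A \le -\mu H(\bar{x})$. Because $x^{(0)}$ minimizes $H$, we have $H(\bar{x}) \ge 0$, and hence $A \le 0$ together with $\mu H(\bar{x}) \le -A$. Writing $h := H(\bar{x})$ (so $p\Delta \le h \le \Delta$ by hypothesis), this gives the key bound $\mu \le -A/h$ on the unknown penalty weight.

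Next I would compare $\bar{x}$ against $x^*$. Using $L(\bar{x}) \le L(x^*)$ together with the feasibility $H(x^*) \le \Delta$ of $x^*$ for \eqref{eq:General-Problem}, I obtain $J(\bar{x}) + \mu h \le J(x^*) + \mu H(x^*) \le J(x^*) + \mu\Delta$, which rearranges to $A - B \le \mu(\Delta - h)$. Since $\Delta - h \ge 0$, I may substitute the bound $\mu \le -A/h$ from the previous step to get $A - B \le (-A/h)(\Delta - h)$; simplifying, this collapses to $B \ge A\Delta/h$. Finally, because $h \ge p\Delta$ we have $\Delta/h \le 1/p$, and multiplying this inequality by $A \le 0$ reverses it to give $A\Delta/h \ge A/p$. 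Combining, $B \ge A/p$, and multiplying through by $p > 0$ yields $A \le pB$, as desired.

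The main obstacle is not any single computation but the careful sign bookkeeping: the argument hinges on $A \le 0$, which is what allows the bound $\mu \le -A/h$ to be substituted in a direction-preserving way and what flips $\Delta/h \le 1/p$ into $A\Delta/h \ge A/p$. Establishing $A \le 0$ in turn depends essentially on $x^{(0)}$ being a global minimizer of $H$, so that $H(\bar{x}) \ge 0$; this is the one place the hypothesis on $x^{(0)}$ is indispensable. I would also note that optimality of $x^*$ is never actually used---only its feasibility $H(x^*) \le \Delta$---so the same bound in fact holds for any feasible comparison point, with $x^*$ optimal merely giving the sharpest statement.
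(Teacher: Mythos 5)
Your proof is correct, and it uses the same two raw ingredients as the paper's proof---optimality of $\bar{x}$ in \eqref{eq:penalty-formulation} tested against $x^{(0)}$ (with $H(x^{(0)})=0$) and against $x^*$ (with feasibility $H(x^*)\le\Delta$)---but it combines them differently. The paper keeps $\mu$ in play and performs a case split on whether $-\mu\Delta \ge J(x^*)-J(x^{(0)})$: in one case the comparison against $x^*$ alone closes the argument, in the other the comparison against $x^{(0)}$ does. You instead eliminate $\mu$ outright, extracting the quantitative bound $\mu \le -A/h$ from the first comparison and substituting it into the second, which avoids any case analysis and in fact proves the sharper intermediate statement $J(\bar{x})-J(x^{(0)}) \le \bigl(H(\bar{x})/\Delta\bigr)\bigl(J(x^*)-J(x^{(0)})\bigr)$, i.e., the approximation factor is the \emph{actual} budget fraction $H(\bar{x})/\Delta$ rather than its lower bound $p$. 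The tradeoff is that your elimination divides by $h = H(\bar{x})$, so it needs $h>0$; your standing assumption $\Delta>0$, $p>0$ guarantees this, but the degenerate case $\Delta = 0$ (which forces $h=0$) is left unjustified in your write-up, whereas the paper's division-free case split happens to cover it silently (there one checks $B = J(x^*)-J(x^{(0)}) \le 0$ from feasibility of $x^{(0)}$, so the paper's first case applies). That hole is trivial to patch---when $h=0$ the comparison against $x^*$ gives $A \le B \le pB$ directly for $p\le 1$---so it is a caveat, not a flaw. Your closing observation that only feasibility, not optimality, of $x^*$ is used applies equally to the paper's argument, and means both proofs actually bound $J(\bar{x})-J(x^{(0)})$ against $p\,(J(y)-J(x^{(0)}))$ for every feasible $y$.
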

\begin{proof}
    Since $\bar{x}$ is optimal for \eqref{eq:penalty-formulation}, we have $J(\bar{x}) + \mu H(\bar{x}) \le J(x^*) + \mu H(x^*)$. We therefore have
    \begin{align}
    J(\bar{x}) &\le J(x^*) + \mu (H(x^*) - H(\bar{x})) \\
        &\le J(x^*) + \mu(1 - p)\Delta , \label{eq:J2}
    \end{align}
where in the second inequality we make use of how $H(x^*)\le \Delta$ and that $H(\bar{x}) \ge p\Delta$. 
If $p = 1$, we have $J(\bar{x}) \le J(x^*)$ as desired. Suppose alternatively that $p < 1$. 
If we have $\mu(1-p)\Delta \le (p-1)(J(x^*) - J(x^{(0)}))$, or equivalently $-\mu\Delta \ge J(x^*) - J(x^{(0)})$, then \eqref{eq:J2} implies $J(\bar{x}) - J(x^{(0)}) \le p(J(x^*) - J(x^{(0)}))$ as desired. 
Assume instead that $-\mu\Delta < J(x^*) - J(x^{(0)})$. 
Since $x^{(0)}$ is feasible for \eqref{eq:penalty-formulation} and $\bar{x}$ is optimal, we have $J(\bar{x}) + \mu(H(\bar{x}) - \Delta)\le J(x^{(0)})-\mu(H(x^{(0)}) - \Delta) = J(x^{(0)}) - \mu \Delta$. Since $H(\bar{x}) - \Delta \ge (p-1)\Delta$, we have
    \[
        J(\bar{x})  - J(x^{(0)}) \le -\mu p\Delta < p(J(x^*)-J(x^{(0)})) ,
    \]
which gives us our desired approximation result.
\end{proof}

Determining $\mu$ involves finding the smallest $\mu$ such that the solution of \eqref{eq:penalty-formulation} is feasible for \eqref{eq:General-Problem}. This may be done by using a bisection algorithm to the desired precision in $\mu$.

\paragraph{Rounded Solution to Continuous Relaxation.}
Another natural approach to initialization is to solve the continuous relaxation of \eqref{eq:General-Problem} and appropriately round the solution so as to remain feasible. 
This is always possible if we assume $H_v$ is convex and minimized at an integer.  If $x_v$ is nonintegral at a solution $x$, either $H_v(\lfloor x_v \rfloor) \le H_v(x_v)$ or $H_v(\lceil x_v \rceil) \le H_v(x_v)$, meaning there exists a rounding that is feasible. 
This approach will not necessarily give good solutions if the solution to the continuous relaxation has few integral values. However, when $F$ is linear,$G$ is the total variation, and $H(x) = \|x - \hat{x}\|_1$, the authors in \cite{manns2024discrete} show that the optimal solution of the linear relaxation is integral everywhere except on an induced connected subgraph upon which it takes a constant nonintegral value. Often this subgraph is a relatively small component of the graph, in which case the rounded solution gives a good approximation of the optimal solution.

For both of these latter approaches, we do not have a guarantee like Proposition \ref{prop:minimal-budget} as to the existence of a sequence of improving Graver moves from these initial points to the optimal point. In fact, we may be precluding the possibility of finding the optimal solution should we use these alternate initializations. Nevertheless, we observe in Section \ref{sec:experiments} that these choices  often lead to higher-quality solutions.

\section{Computational Experiments}
\label{sec:experiments}
In this section we explore empirically the efficiency of our algorithms on the two total variation-regularized problems presented in the Introduction: the image reconstruction problem \eqref{eq:image-reconstruction} and the trust-region--constrained subproblem \eqref{eq:trust-region-subproblem}. On each of these problem sets we demonstrate the performance of our own implementation of Algorithm \ref{alg:Randomized-Augmentation} under a selection of parameterizations, and we study the impact of different initializations and varying the efficiency parameter $p$. All examples are benchmarked against the state-of-the-art integer program solver CPLEX.

\subsection{Computational Setup}

In both of our test cases we consider problem instances on $N\times N$ grids with $N \in \{32, 64, 96\}$.  For the image reconstruction problems, we use the images given in Figure \ref{fig:images}, which are a selection of geometric and real images. Each image is resized to be an $N\times N$ image, and the intensity at each pixel is rescaled to take value in $[0, Q]$ where $Q=3$. For these problems we consider $\alpha_i = 0.25i$ for $i=1,2,3,4$ (i.e., $\alpha \in \{0.25,0.5,1,2\})$, and we vary $\Delta$ based on the solution to the unconstrained problem. For each image and choice of $N$ and $\alpha$, we solve \eqref{eq:image-reconstruction} using Algorithm \ref{alg:graver-augmentation} and record the budget $\Delta^*$ used by the optimal solution. Then, to ensure that the budgetary constraint actually impacts the optimal solution, we consider instances where $\Delta = \delta \Delta^*$ for $\delta \in \{0.25, 0.5, 0.75, 1\}$. Altogether, we have 288 instances with 24 instances for each choice of $N$ and $\alpha$.

\begin{figure}[t]
	\centering
	\includegraphics[width=0.25\linewidth]{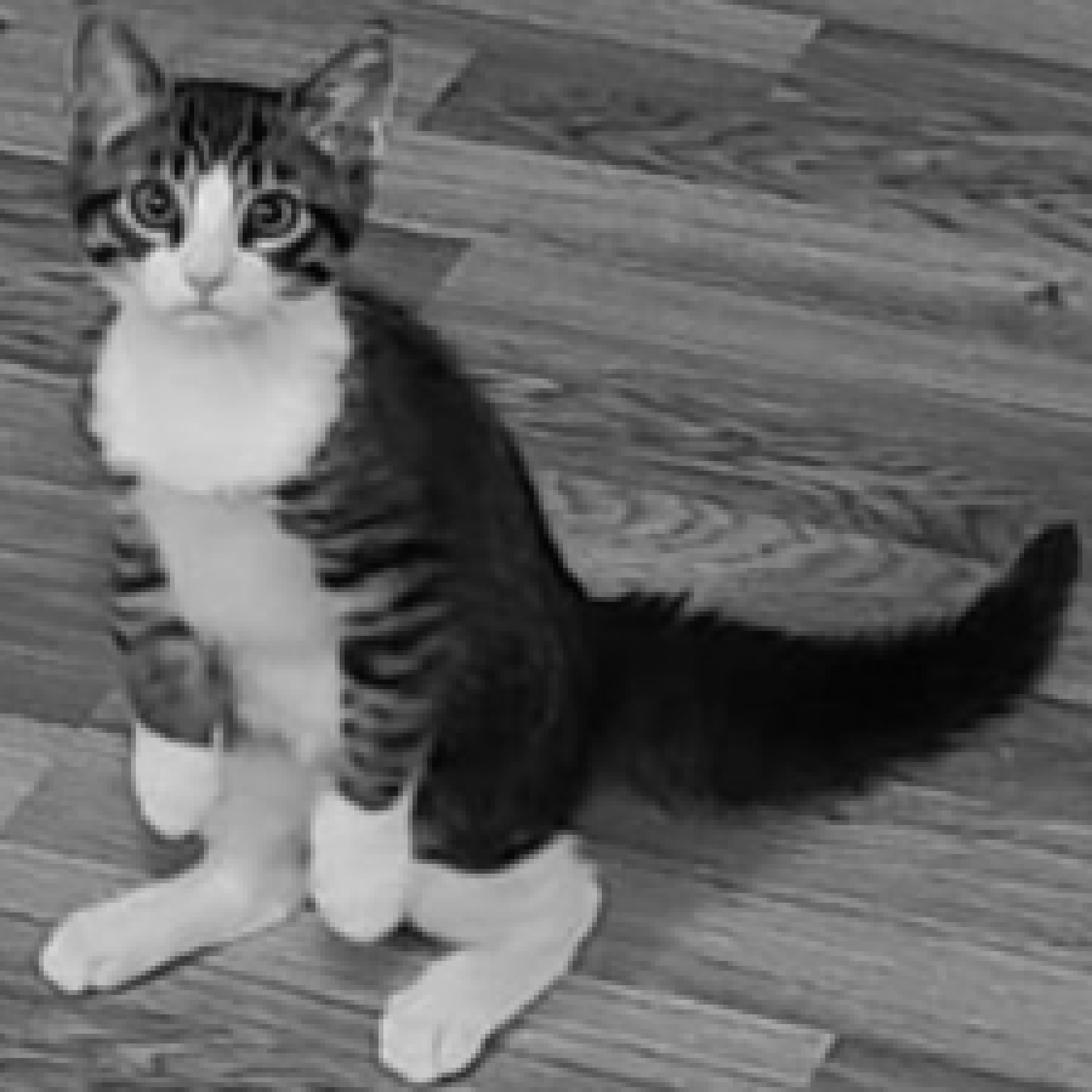}\hspace{0.1cm}
	\includegraphics[width=0.25\linewidth]{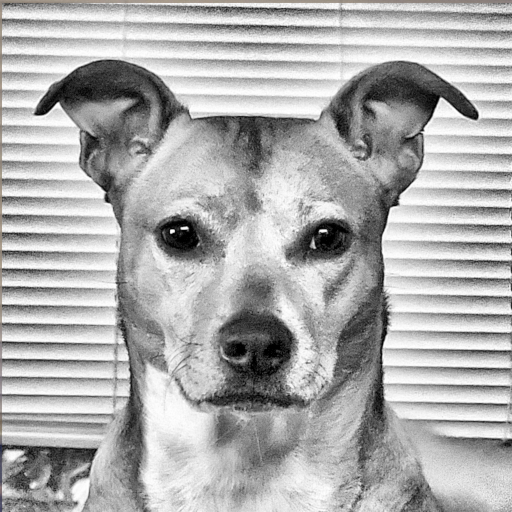}\hspace{0.1cm}
	\includegraphics[width=0.25\linewidth]{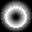} \\[1ex]
	\includegraphics[width=0.25\linewidth]{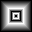}\hspace{0.1cm}
	\includegraphics[width=0.25\linewidth]{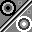}\hspace{0.1cm}
	\includegraphics[width=0.25\linewidth]{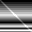}
	\caption{Six different images used in experiments for the image reconstruction problem \eqref{eq:image-reconstruction}}
	\label{fig:images}
\end{figure}

For the trust-region subproblem, the authors in \cite{manns2024discrete} use $\alpha_i = 4(\sqrt{5})^i\times 10^{-4}$ for $i=0,1,2,3,4$ for $N=32$ and $N=64$. For $N=96$, they take $\alpha_i$ for $i=1,2,3$. The choice of $\Delta$ varies as the trust-region algorithms proceed, expanding and shrinking, and it starts at $\frac{1}{16}N^2$. In practice, this varies from $\Delta = 1$ up to $\Delta = 576$ for the problems on the largest graphs. The total number of instances for each problem is listed in the final column in Table \ref{tab:trust-region-wins}. 

For each instance, we ran Algorithm \ref{alg:Randomized-Augmentation} with various initial points and two choices for $p$. We consider the three distinct choices for an initial point as described in Section \ref{sec:starting-point}, namely, the minimal budget point (denoted ``Zero'' as these are zero budget points), the minimizer of \eqref{eq:penalty-formulation} found by a bisection algorithm (denoted ``Penalized''), and the rounded solution to the continuous relaxation (denoted ``Rounded''). We compute the minimizer of the penalized form \eqref{eq:penalty-formulation} using Algorithm \ref{alg:graver-augmentation}, and we perform a bisection algorithm to find optimal $\mu$ within an accuracy of $10^{-4}$. To find the solution to the continuous relaxation, we use CPLEX for both problems. The two choices that we use for $p$  are $p=0$ and $p=1$. We note that taking $p=0$ corresponds to always choosing the entering variable with most negative coefficient, and so we denote this by ``smallest coefficient'';  $p=1$ is analogous to the steepest edge rule when taking $H(x) = x$, and so we denote this case ``steepest edge.''

For each instance and configuration, we ran 100 trials of Algorithm \ref{alg:Randomized-Augmentation}, stopping either once 100 trials were completed or a timeout of 600 seconds was reached. In addition, each individual trial was timed out after 60 seconds. The solution given after each trial was subsequently polished to ensure that the solution is 2-optimal. We then chose the solution with minimal objective from all trials that completed. For comparison, we also used the solver CPLEX with default settings on each instance. We timed out the solve after 600 seconds and report the last solution found.
All experiments were performed on an Intel Xeon CPU with 192 cores, 2.2 GHz, and 1.48 TB of memory. We take advantage of the multiple cores to run the experiments in parallel, but each individual instance is solved in serial on a single core (meaning all 100 trials of Algorithm \ref{alg:Randomized-Augmentation} are performed sequentially).

\subsection{Results}

We record the results of the experiments averaged over all instances for given $N$ and $\alpha$ in various tables in this section. We considered three primary metrics by which to compare the different algorithms:  the number of instances in which a given method finds the best solution out of all methods, the MIP gap of the best solution found averaged over all instances, and the average time to completion. For the image reconstruction problem \eqref{eq:image-reconstruction}, we record the table of average solution times in Table \ref{tab:image-times}, the table counting the best methods in Table \ref{tab:image-wins}, and the table of the average MIP gaps in Table \ref{tab:image-gaps}.
For the trust-region--constrained problem \eqref{eq:trust-region-subproblem}, we record the average solution times in Table \ref{tab:trust-region-times}, the count of best methods for each class of instances in Table \ref{tab:trust-region-wins}, and the average MIP gaps in Table \ref{tab:trust-region-gaps}.

\begin{table}[]
    \centering
\begin{tabular}{cr|r|rrr|rrr}
 &  &       & \multicolumn{6}{c}{Algorithm \ref{alg:Randomized-Augmentation}} \\
 &  &       & \multicolumn{3}{c}{Smallest Coefficient $(p=0)$} & \multicolumn{3}{|c}{Steepest Edge $(p=1)$} \\
 $N$ & $i$ & CPLEX & Zero & Penalized & Rounded & Zero & Penalized & Rounded \\
\hline
\multirow[t]{4}{*}{32} & 1 & 600.0 & 2.3 & 0.9 & 1.4 & 2.2 & 1.0 & 1.4 \\
 & 2 & 600.0 & 2.3 & 0.9 & 1.6 & 2.5 & 1.0 & 1.5 \\
 & 3 & 600.0 & 2.5 & 1.0 & 1.7 & 3.2 & 1.3 & 2.2 \\
 & 4 & 550.2 & 3.0 & 1.2 & 2.1 & 5.3 & 2.0 & 3.5 \\
\hline
\multirow[t]{4}{*}{64} & 1 & 600.0 & 13.1 & 3.2 & 7.2 & 9.3 & 3.3 & 5.4 \\
 & 2 & 600.0 & 13.5 & 3.3 & 7.4 & 10.8 & 3.5 & 6.3 \\
 & 3 & 600.0 & 14.3 & 3.5 & 7.8 & 14.7 & 4.1 & 8.7 \\
 & 4 & 600.0 & 14.4 & 4.1 & 8.8 & 34.6 & 14.9 & 23.4 \\
\hline
\multirow[t]{4}{*}{96} & 1 & 600.0 & 48.1 & 7.1 & 22.5 & 23.5 & 7.2 & 12.8 \\
 & 2 & 600.0 & 48.4 & 7.2 & 23.2 & 27.8 & 7.9 & 16.0 \\
 & 3 & 600.0 & 46.7 & 7.8 & 22.5 & 38.9 & 9.3 & 23.0 \\
 & 4 & 600.0 & 43.1 & 9.0 & 22.8 & 68.5 & 15.0 & 40.1 \\
\end{tabular}
    \caption{Time to completion in seconds averaged over all instances of the image reconstruction problem \eqref{eq:image-reconstruction} on an $N\times N$ grid graph with TV regularization parameter $\alpha = 0.25i$. All solves were timed out after 600 seconds.}
    \label{tab:image-times}
\end{table}

In all but two cases in the image reconstruction problem, CPLEX fails to produce a solution that it deems optimal in the allotted time of 600 seconds. The remaining methods all finish one to two orders of magnitude faster in the worst case, needing slightly over a minute on average to complete 100 trials, as seen in the steepest edge zero parameterization. We observe generally that the instances take more time to complete as $\alpha$ increases and that this situation is especially pronounced when using the steepest edge rule. We suspect that this is largely due to its being more difficult to find improving moves as $\alpha$ increases. The impact of choosing a different starting point is also pronounced because taking the rounded solution has the effect of nearly halving the solution time whereas the penalized initialization leads to a reduction by a factor of 3 to 4 in the average solve time. 

\begin{table}[]
    \centering
\begin{tabular}{cr|r|rrr|rrr|c}
 &  &       & \multicolumn{6}{c|}{Algorithm \ref{alg:Randomized-Augmentation}} & \\
 &  &       & \multicolumn{3}{c}{Smallest Coefficient $(p=0)$} & \multicolumn{3}{|c|}{Steepest Edge $(p=1)$} \\
 $N$ & $i$ & CPLEX & Zero & Penalized & Rounded & Zero & Penalized & Rounded & Total\\
\hline
\multirow[t]{4}{*}{32} & 1 & \textbf{22} & 12 & 20 & 12 & 13 & 21 & 12 & 24\\
 & 2 & 19 & 10 & 18 & 10 & 15 & \textbf{21} & 14 & 24\\
 & 3 & \textbf{21} & 7 & 15 & 7 & 20 & \textbf{21} & 20 & 24\\
 & 4 & 18 & 7 & 17 & 6 & \textbf{22} & 21 & \textbf{22} & 24\\
\hline
\multirow[t]{4}{*}{64} & 1 & 9 & 7 & \textbf{17} & 6 & 9 & 16 & 7 & 24\\
 & 2 & 12 & 6 & 13 & 6 & 8 & \textbf{21} & 9 & 24\\
 & 3 & 13 & 6 & 11 & 6 & 15 & \textbf{21} & 13 & 24\\
 & 4 & 12 & 7 & 10 & 6 & 18 & 18 & \textbf{19} & 24\\
\hline
\multirow[t]{4}{*}{96} & 1 & 10 & 7 & 15 & 7 & 8 & \textbf{16} & 8 & 24\\
 & 2 & 8 & 7 & 10 & 7 & 9 & \textbf{19} & 10 & 24\\
 & 3 & 7 & 6 & 12 & 6 & 13 & \textbf{22} & 12 & 24\\
 & 4 & 8 & 6 & 11 & 7 & 15 & \textbf{20} & 15 & 24\\

\end{tabular}
    \caption{Number of instances for which each method produced the best (not necessarily optimal) solution among all methods in the image reconstruction problem \eqref{eq:image-reconstruction} on an $N\times N$ grid graph with TV regularization parameter $\alpha=0.25i$. The total number of instances is listed in the last column.}
    \label{tab:image-wins}
\end{table}

\begin{table}[]
    \centering
\begin{tabular}{cr|r|rrr|rrr}
 &  &       & \multicolumn{6}{c}{Algorithm \ref{alg:Randomized-Augmentation}} \\
 &  &       & \multicolumn{3}{c}{Smallest Coefficient $(p=0)$} & \multicolumn{3}{|c}{Steepest Edge $(p=1)$} \\ $N$ & $i$ & CPLEX & Zero & Penalized & Rounded & Zero & Penalized & Rounded \\
\hline
\multirow[t]{4}{*}{32} & 1 & \textbf{17.1} & (33.4)17.2 & (17.3)17.1 & (26.8)17.2 & (17.9)17.1 & (17.1)17.1 & (17.7)17.1 \\
 & 2 & 12.8 & (27.1)13.4 & (13.0)12.8 & (21.8)13.2 & (12.9)12.8 & (12.8)\textbf{12.7} & (13.0)12.8 \\
 & 3 & 9.23 & (22.4)10.7 & (9.67)9.29 & (17.6)10.4 & (9.31)9.25 & (9.23)\textbf{9.22} & (9.32)9.24 \\
 & 4 & 6.81 & (17.4)9.48 & (6.98)6.84 & (12.4)8.62 & (6.79)\textbf{6.77} & (6.78)6.78 & (6.79)\textbf{6.77} \\
\hline
\multirow[t]{4}{*}{64} & 1 & 21.9 & (42.1)22.1 & (22.1)21.8 & (33.1)22.1 & (22.6)22.0 & (21.9)\textbf{21.8} & (22.4)21.9 \\
 & 2 & 17.4 & (37.1)18.6 & (17.7)17.5 & (28.4)18.3 & (17.8)17.6 & (17.4)\textbf{17.4} & (17.7)17.5 \\
 & 3 & 13.6 & (31.8)16.3 & (13.9)13.7 & (24.1)15.4 & (13.7) 13.6 & (13.6)\textbf{13.6} & (13.7)13.7 \\
 & 4 & 10.5 & (26.3)15.0 & (11.1)10.6 & (20.0)13.1 & (10.4)10.4 & (10.4)10.4 & (10.4)\textbf{10.4} \\
\hline
\multirow[t]{4}{*}{96} & 1 & 24.1 & (45.3)24.2 & (24.2)23.9 & (35.7)24.3 & (24.6)24.0 & (23.9)\textbf{23.9} & (24.4)24.0 \\
 & 2 & 20.1 & (41.3)21.1 & (20.3)20.0 & (31.5)20.8 & (20.3)20.1 & (19.9)\textbf{19.9} & (20.3)20.0 \\
 & 3 & 15.9 & (36.2)18.6 & (15.9)15.6 & (27.0)17.5 & (15.7)15.6 & (15.6)\textbf{15.6} & (15.7)15.6 \\
 & 4 & 11.7 & (29.3)16.2 & (12.4)11.7 & (22.9)14.2 & (11.6)11.6 & (11.6)\textbf{11.6} & (11.6)11.6 \\
\end{tabular}
    \caption{Relative percentage MIP gaps for the image reconstruction problem  \eqref{eq:image-reconstruction} averaged over all instances on an $N\times N$ grid graph with TV regularization parameter $\alpha = 0.25i$. We compute this as $(obj - lb)/|obj| \times 100$, where $obj$ is the computed objective and $lb$ is the final lower bound found by CPLEX. Each entry for Algorithm 4 presents the gap before polishing in parentheses and the gap after polishing afterwards.}
    \label{tab:image-gaps}
\end{table}

In terms of solution quality, we observe that, generally, methods that make use of steepest edge pivoting produce the solutions with lowest objective; of those, when we initialize the solution from the penalized formulations, we obtain the best solutions. We also observe that polishing the solutions to be 2-optimal improves the solutions in almost all cases, but the improvement is significant when using smallest coefficient pivoting (often reducing the MIP gap by a factor of 2). For small $N$, we observe that CPLEX performs comparably with the best methods, but the advantage in using randomized approaches becomes clear as $N$ increases. There is also a trend where the randomized approaches appear to perform better for higher $\alpha$. This may be explained by the large number of available improving Graver moves for low $\alpha$ that decreases the probability of finding a correct sequence of moves to the optimum. For larger $\alpha$, solutions  generally comprise large connected regions of constant value, meaning fewer moves are needed, signifying a higher probability of success.
A sample depiction of the solutions produced by each of these methods is given in the appendix in Figure \ref{fig:image-panel}.

\begin{table}[]
    \centering
\begin{tabular}{cr|r|rrr|rrr}
 &  &       & \multicolumn{6}{c}{Algorithm \ref{alg:Randomized-Augmentation}} \\
 &  &       & \multicolumn{3}{c}{Smallest Coefficient $(p=0)$} & \multicolumn{3}{|c}{Steepest Edge $(p=1)$} \\ $N$ & $i$ & CPLEX & Zero & Penalized & Rounded & Zero & Penalized & Rounded \\
\hline
\multirow[t]{5}{*}{32} & 0 & 0.7 & 0.7 & 0.7 & 0.7 & 1.3 & 1.2 & 1.2 \\
 & 1 & 1.6 & 0.9 & 0.9 & 0.9 & 1.7 & 1.6 & 1.6 \\
 & 2 & 2.2 & 1.2 & 1.2 & 1.2 & 3.1 & 3.0 & 3.1 \\
 & 3  & 4.2 & 3.9 & 3.9 & 3.8 & 9.0 & 9.0 & 9.0 \\
 & 4 & 74.8 & 46.0 & 45.5 & 46.2 & 52.6 & 52.7 & 52.8 \\
\hline
\multirow[t]{5}{*}{64} & 0 & 66.2 & 9.2 & 9.2 & 9.2 & 40.3 & 40.0 & 40.1 \\
 & 1 & 46.8 & 15.2 & 15.5 & 15.3 & 49.8 & 49.1 & 49.2 \\
 & 2 & 208.5 & 23.4 & 23.1 & 23.3 & 76.1 & 75.9 & 75.8 \\
 & 3 & 279.0 & 170.4 & 166.8 & 167.3 & 278.9 & 277.0 & 277.9 \\
 & 4 & 600.0 & 600.0 & 600.0 & 600.0 & 600.0 & 600.0 & 600.0 \\
\hline
\multirow[t]{3}{*}{96} & 1 & 295.3 & 117.2 & 117.6 & 117.8 & 281.1 & 280.4 & 280.8 \\
 & 2 & 507.4 & 241.0 & 241.3 & 240.8 & 415.4 & 415.1 & 415.1 \\
 & 3 & 473.4 & 365.2 & 363.7 & 364.0 & 431.6 & 430.8 & 431.7 \\
\end{tabular}
    \caption{Time to completion in seconds averaged over all instances of the trust-region--constrained problem \eqref{eq:trust-region-subproblem} from \cite{manns2024discrete} on an $N\times N$ grid graph with TV regularization parameter $\alpha = 4(\sqrt{5})^i \times 10^{-4}$. All solves were timed out after 600 seconds.}
    \label{tab:trust-region-times}
\end{table}

The trust-region subproblem instances proved to be harder to solve for the randomized algorithms. In particular,  in almost all cases, the different choices of initialization provided no benefit in that both the rounded and penalized initializations were almost always the same as the minimal budget point. In addition, we observe that on average these algorithms failed to terminate in the allotted time for several of the largest problems. For the steepest edge parameterizations, 20 of the instances for $N=96$ failed to find any solution better than the initial point before timing out. On average we observed that the largest coefficient configurations tended to terminate between one to two times as fast as CPLEX; the steepest edge parameterizations were slightly faster but comparable in total runtime to CPLEX.

\begin{table}[]
    \centering
\begin{tabular}{cr|r|rrr|rrr|c}
 &  &       & \multicolumn{6}{c|}{Algorithm \ref{alg:Randomized-Augmentation}} \\
 &  &       & \multicolumn{3}{c}{Smallest Coefficient $(p=0)$} & \multicolumn{3}{|c|}{Steepest Edge $(p=1)$} \\ $N$ & $i$ & CPLEX & Zero & Penalized & Rounded & Zero & Penalized & Rounded & Total\\
\hline
\multirow[t]{5}{*}{32} & 0 & \textbf{40} & 33 & 34 & 34 & 37 & 35 & 35 & 40\\
 & 1 & \textbf{34} & 26 & 27 & 27 & 31 & 29 & 29 & 34 \\
 & 2 & \textbf{24} & 19 & 19 & 19 & 23 & 23 & 23 & 24\\
 & 3 & \textbf{15} & 14 & 14 & 14 & 13 & 13 & 13 & 15 \\
 & 4 & \textbf{1} & \textbf{1} & \textbf{1} & \textbf{1} & \textbf{1} & \textbf{1} & \textbf{1} & 1\\
\hline
\multirow[t]{5}{*}{64} & 0 & \textbf{68} & 36 & 39 & 39 & 60 & 56 & 56 & 70\\
 & 1 & \textbf{38} & 23 & 24 & 24 & 33 & 32 & 32 & 38\\
 & 2 & 44 & 34 & 35 & 35 & \textbf{51} & \textbf{51} & \textbf{51} & 53\\
 & 3 & 15 & 14 & 14 & 14 & 15 & \textbf{16} & \textbf{16} & 19 \\
 & 4 & \textbf{1} & \textbf{1} & \textbf{1} & \textbf{1} & \textbf{1} & \textbf{1} & \textbf{1} & 1\\
\hline
\multirow[t]{3}{*}{96} & 1 & \textbf{58} & 48 & 51 & 51 & 49 & 49 & 49 & 74\\
 & 2 & 52 & \textbf{138} & \textbf{138} & \textbf{138} & 109 & 109 & 109 & 173\\
 & 3 & 13 & \textbf{30} & 29 & 29 & 21 & 21 & 21 & 35\\

\end{tabular}
    \caption{Number of instances from \cite{manns2024discrete} for which each method produced the best (not necessarily optimal) solution on an $N\times N$ grid graph with TV regularization parameter $\alpha = 4(\sqrt{5})^i \times 10^{-4}$ in the trust-region--constrained problem \eqref{eq:trust-region-subproblem}. The final column contains the total number of instances.}
    \label{tab:trust-region-wins}
\end{table}

\begin{table}[]
    \centering
\begin{tabular}{cr|r|rrr|rrr}
 &  &       & \multicolumn{6}{c}{Algorithm \ref{alg:Randomized-Augmentation}} \\
 &  &       & \multicolumn{3}{c}{Smallest Coefficient $(p=0)$} & \multicolumn{3}{|c}{Steepest Edge $(p=1)$} \\ $N$ & $i$ & CPLEX & Zero & Penalized & Rounded & Zero & Penalized & Rounded \\
\hline
\multirow[t]{5}{*}{32} & 0 & \textbf{0.00} & (1.13)0.11 & (0.47)0.02 & (0.47)0.02 & (0.04)0.01 & (0.03)0.02 & (0.03)0.02 \\
 & 1 & \textbf{0.00} & (3.66)2.65 & (1.23)0.44 & (1.23)0.44 & (0.08)0.03 & (0.14)0.09 & (0.14)0.09 \\
 & 2 & \textbf{0.00} & (1.34)0.33 & (1.34)0.33 & (1.34)0.33 & (0.12)0.03 & (0.12)0.03 & (0.12)0.03 \\
 & 3 & \textbf{0.00} & (39.9)0.05 & (39.9)0.05 & (39.9)0.05 & (15.1)11.0 & (15.1)11.0 & (15.1)11.0 \\
 & 4 & \textbf{0.00} & (0.00)0.00 & (0.00)0.00 & (0.00)0.00 & (0.00)0.00 & (0.00)0.00 & (0.00)0.00 \\
\hline
\multirow[t]{5}{*}{64} & 0 & \textbf{0.21} & (1.42)0.62 & (1.08)0.61 & (1.08)0.61 & (0.25)0.24 & (0.25)0.24 & (0.25)0.24 \\
 & 1 & \textbf{0.00} & (3.96)1.67 & (2.78)0.99 & (2.78)0.99 & (0.04)0.01 & (0.06)0.02 & (0.06)0.02 \\
 & 2 & 0.78 & (1.85)1.25 & (1.48)1.03 & (1.48)1.03 & (0.67)\textbf{0.64} & (0.67)\textbf{0.64} & (0.67)\textbf{0.64} \\
 & 3 & 4.99 & (6.24)4.94 & (5.88)4.93 & (5.88)4.93 & (4.93)\textbf{4.78} & (4.92)\textbf{4.78} & (4.92)\textbf{4.78} \\
 & 4 & --- & (---) --- & (---) --- & (---) --- & (---) --- & (---) --- & (---) --- \\
\hline
\multirow[t]{3}{*}{96} & 1 & \textbf{2.55} & (5.24)3.86 & (4.86)3.50 & (4.86)3.50 & (2.80)2.65 & (2.78)2.65 & (2.78)2.65 \\
 & 2 & 6.90 & (6.04)\textbf{5.73} & (6.04)\textbf{5.73} & (6.04)\textbf{5.73} & (7.80)7.76 & (7.80)7.76 & (7.80)7.76 \\
 & 3 & 16.5 & (15.0)\textbf{14.7} & (15.2)14.7 & (15.2)14.7 & (16.0)15.7 & (15.8)15.5 & (15.7)15.4 \\

\end{tabular}
    \caption{Relative percentage MIP gaps for the trust-region--constrained problem \eqref{eq:trust-region-subproblem} averaged over all instances from \cite{manns2024discrete} on an $N\times N$ grid with TV regularization parameter $\alpha = 4(\sqrt{5})^i \times 10^{-4}$. We compute this as $(obj - lb)/|obj| \times 100$, where $obj$ is the computed objective and $lb$ is the final lower bound found by CPLEX. For $N=64$ and $i = 4$, the best solutions found took value 0, and so a relative gap could not be computed. Each entry for Algorithm 4 presents the gap before polishing in parentheses and the gap after polishing afterwards.}
    \label{tab:trust-region-gaps}
\end{table}

In terms of quality of solutions, for the smaller instances with $N=32$, CPLEX almost always produces the best solution, which is apparent from the average MIP gap produced for each instance. The randomized methods, save for a few instances, are generally within one percent of the optimal solution for these cases. For the larger instances and for higher $\alpha$, the randomized instances outperformed CPLEX. The steepest edge configurations perform marginally better in terms of MIP gap for higher $\alpha$ when $N=64$; and for $N=96$ and high $\alpha$, the smallest coefficient randomized methods each perform one to two percentage points better on average. The steepest edge parameterizations also perform better than CPLEX, but, as mentioned before, in several instances  this method timed out before finding any good solutions.

As a whole, these randomized approaches can produce high-quality solutions that are near optimal in many cases. For smaller instances where CPLEX can solve to optimality, the solutions these methods produce are often within a one percent gap; and for the larger instances, these methods often outperform CPLEX significantly.
Empirically, the computational time for the randomized heuristics also appears to scale linearly, making the approaches more suitable for larger problems.

\subsection{Relation between $p$, Objective, and Iteration Count}
\label{sec:p-relation}

\begin{figure}
    \centering
    \includegraphics[width=\linewidth]{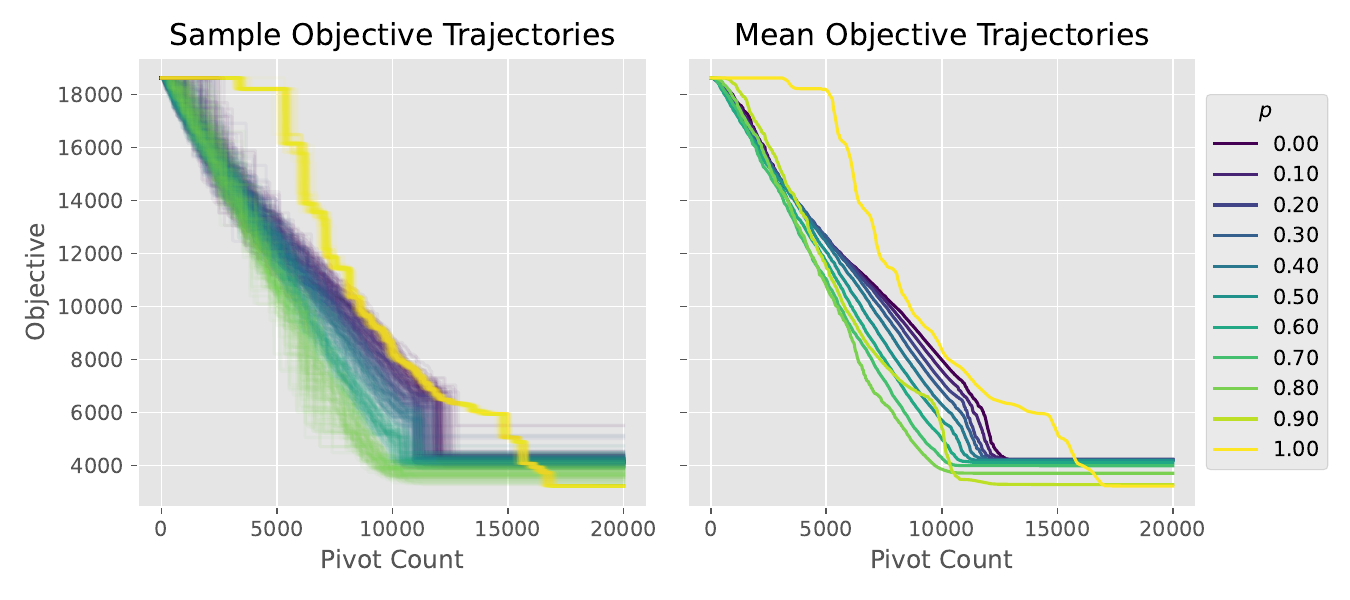}
    \caption{Objective trajectories with varying $p$ for image reconstruction problem \eqref{eq:image-reconstruction} on Image 4 with $N=64, \alpha=2, \delta=0.75$. The left plot contains 100 sample trajectories for $p \in \{0,0.2,\ldots,1\}$ and the right plots the average objective after a given number of pivots for $p \in \{0,0.1,\ldots,1\} $. The large vertical drop in objectives for most trajectories near pivot 11000 corresponds to the change following solution polishing.}
    \label{fig:objective-trajectories}
\end{figure}

In this subsection we briefly remark on some interesting qualitative results we observed regarding the procedure of the algorithm as we varied the parameter $p$ introduced in \eqref{eq:weighting-scheme} in Algorithm \ref{alg:Randomized-Augmentation}, which tunes how much pivoting is influenced by change in $H$. In the main group of experiments, we considered solely $p=0$ and $p=1$ and largely observed that setting $p=1$ produces solutions of a lower objective, but with significantly longer runtimes. As we adjust $p$ between 0 and 1 and beyond, we observe an interesting nonlinear relationship between $p$ and both the objective and number of pivots that merits further study.

In Figure \ref{fig:objective-trajectories} we depict sample objective trajectories for a specific instance of the image reconstruction problems \eqref{eq:image-reconstruction}. We note that there appears to be a qualitative shift in algorithm behavior as $p$ approaches 1, where for small $p$ the trajectories largely bear the same shape and exhibit a larger variance in objective, suggesting that a greater diversity of Graver augmentations is selected. For $p$ close to 1, there is lower variance in objective, and there are larger discrete jumps in objective, suggesting there are similar Graver moves being taken among the sample at around the same time. These moves also on average take more time to find. 

\begin{figure}
    \centering
    \includegraphics[width=\linewidth]{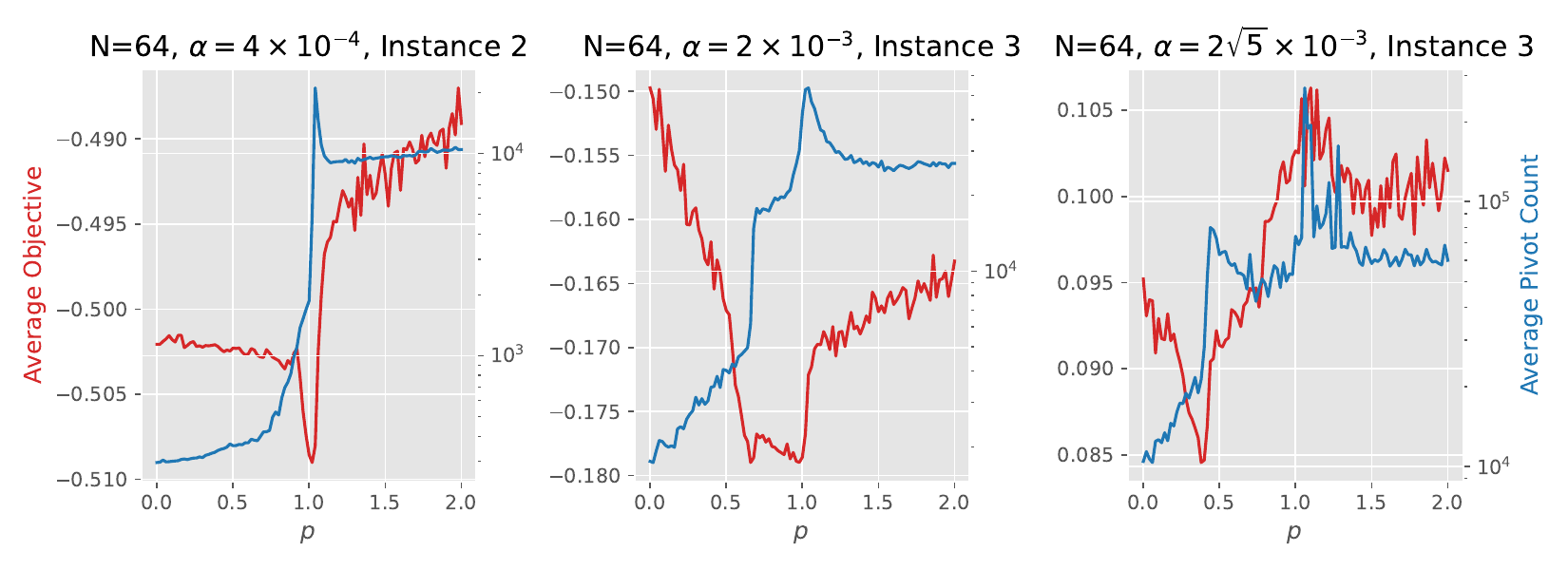}
    \caption{Relationship between average objective (in red), average number of pivots (in blue), and $p$ in Algorithm \ref{alg:Randomized-Augmentation} for three select instances of the trust-region--constrained problem \eqref{eq:trust-region-subproblem}. Quantities are averaged over 100 trials of the algorithm.}
    \label{fig:efficiency-transition}
\end{figure}

This kind of qualitative shift in behavior is also apparent in the trust-region--constrained problems. In Figure \eqref{fig:efficiency-transition} we plot the relationship between $p$ and the average objective and the average number of pivots made. As can be seen in each of the plots, there is a clear transition where the number of pivots significantly increases, in these cases by one to two orders of magnitude, and the quality of the solutions significantly improves as well. Interestingly, while this phase transition often occurs around $p=1$, there are specific instances where it occurs earlier and where choosing $p=1$ gives the worst solutions on average among all choices for $p$. There also appears to be a spike in the number of pivots for $p$ slightly larger than $p=1$. Given the clear improvement in objective occurring around this phase transition, it stands to reason that the performance of these randomized algorithms can be significantly improved by properly choosing $p$. We leave a more in-depth study on this choice to future work.

\section{Conclusion}
\label{sec:conclusion}
In this paper we establish the structure of the Graver basis for a class of problems derived from total variation-regularized optimization problems, and we demonstrate how to exploit this structure to devise efficient globally optimal augmentation algorithms.
In spite of the typically exponential size of the basis, we show how to efficiently search this set for improving moves. 
We then demonstrate how to implement a Graver augmentation algorithm for problems with additional constraints via a randomized variant of the simplex method.
We demonstrate empirically that for large problems this method can outperform state-of-the-art integer program solvers.

Our discussion of the randomized augmentation algorithm has left several aspects  untouched.
In particular, it would be of interest to assess whether the number of pivots in the randomized simplex method may be bounded and how close to optimal the solutions produced are.
We would also like to study further the choice of $p$ in Algorithm \ref{alg:Randomized-Augmentation} since in Section \ref{sec:p-relation} we observed clear phase transitions in the performance of the algorithms at specific values of $p$.
The success in using randomized Graver augmentation for this class of problems might also translate to other combinatorial optimizations.
In particular, the Graver basis for matching problems is well known to correspond to specific alternating walks \cite{reyes2012minimal}. 
We may be able to address various constrained versions of these classical problems via randomized construction of Graver elements.
The methods presented in this paper also naturally extend to problems with multiple separable constraints; and as in \cite{manns2024discrete}, we may situate these randomized subproblem solves in a trust-region algorithm for problems with general nonlinear $F$.
\section*{Data Availability}
The software developed for performing the experiments discussed in this paper and the instances of the image reconstruction problem are available from the corresponding author upon request. The trust-region--constrained subproblem instances were provided by the authors of \cite{manns2024discrete} and can be found at the following repository: \url{https://github.com/INFORMSJoC/2024.0680}.

\section*{Acknowledgments}
This work was supported by the U.S. Department of Energy, Office of Science, Advanced Scientific Computing Research, under contract number DE-AC02-06CH11357.

\section*{Conflicts of Interest}
The authors have no competing interests to declare that are relevant to the content of this article.

\bibliography{refs}
\vfill
\begin{flushright}
\scriptsize
\framebox{\parbox{\textwidth}{
The submitted manuscript has been created by UChicago Argonne, LLC, Operator of Argonne National Laboratory (“Argonne”). 
Argonne, a U.S. Department of Energy Office of Science laboratory, is operated under Contract No. DE-AC02-06CH11357. 
The U.S. Government retains for itself, and others acting on its behalf, a paid-up nonexclusive, irrevocable worldwide 
license in said article to reproduce, prepare derivative works, distribute copies to the public, and perform publicly 
and display publicly, by or on behalf of the Government.  The Department of Energy will provide public access to these 
results of federally sponsored research in accordance with the DOE Public Access Plan. 
\url{http://energy.gov/downloads/doe-public-access-plan}.
}}
\normalsize
\end{flushright}

\pagebreak

\appendix

\section{Additional Experimental Results}

We also present the best solutions provided by each approach on each sample image for the $96\times 96$ image for $\alpha = 1$ and $\delta = 0.75$ in Figure \ref{fig:image-panel}. There are clear qualitative differences in the solutions given by choosing different pivot rules for the randomized algorithms. In particular, we observe that for the approaches using smallest coefficient, the solutions clearly exhibit higher total variation in that there are more speckle patterns, particularly in the solutions initialized with the zero-budget solution and the rounded solution. Algorithms that use the steepest edge approach genuinely appear to make more efficient usage of the budget. This is especially apparent for Image 2 where the solutions are more square whereas for the largest coefficient runs, random protrusions about the square appear. In terms of objective, as seen in Table \ref{tab:image-wins}, initialization using the penalized problem and then applying the steepest edge pivot rule produces the best solution in terms of objective, and this can be seen qualitatively in these figures.

\begin{figure}
    \centering
    \includegraphics[width=\linewidth]{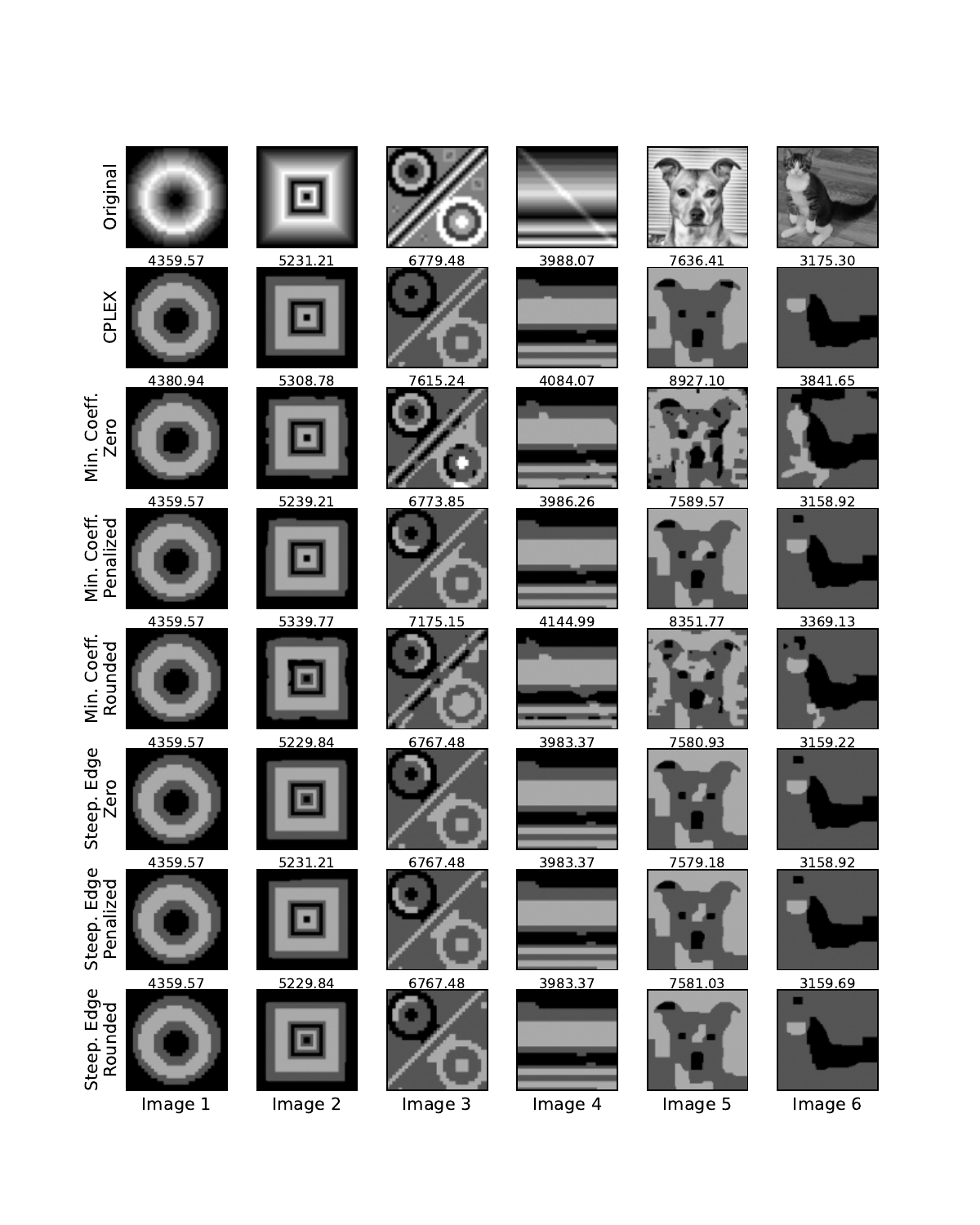}
    \caption{Selection of test images in row 1 followed by the best solutions produced by seven approaches to finding the solution to the image reconstruction problem \eqref{eq:image-reconstruction} on an $96\times 96$ grid with $Q=3$, $\alpha = 2$, and $\delta = 0.75$. The number above an image denotes the objective value of the given solution.}
    \label{fig:image-panel}
\end{figure}

\end{document}